\def\@settitle{\begin{center}%
    \baselineskip14\p@\relax
    \bfseries
    \MakeUppercase{\@title}
  \end{center}%
}
\newtheorem{theorem}{Theorem}
\newtheorem{lemma}{Lemma}[section]
\newtheorem{proposition}{Proposition}%[section]
\newtheorem{corollary}{Corollary}[section]
\newtheorem{definition}{Definition}
\newtheorem{example}{\it Example}[section]
\numberwithin{equation}{section}
\def\vol{\:{\rm vol}}
\def\R{{\mathbb R}}
\begin{document}
\title{Average number of solutions}
\author{B. Kazarnovskii}
\address {Institute for Information Transmission Problems\newline
{\it kazbori@gmail.com}.}
\thanks{MSC 2010: 52A39, 51B20, 53С65}
\keywords{Banach space, Crofton formula, normal density, mixed volume}
%\thanks{The research was carried out at the Institute for Information Transmission Problems}
\begin{abstract}
Let $X$ be an $n$-dimensional manifold and $V _1,\ldots,V_n\subset C^\infty(X,\mathbb R)$  finite-dimensional vector spaces.
For systems of equations
$\{f_i = a_i\colon\: f_i\in V_i,\:a_i \in\mathbb R,\:i=1,\ldots,n\}$
we discover a relationship between the average number of their solutions
and mixed volumes of convex bodies.
To do this, we choose Banach metrics in the spaces $V_i$.
Using these metrics,
we construct 1) the measure in the space of systems, and
2) Banach convex bodies in $X$, i.e., collections
of centrally symmetric convex bodies in the fibers of the cotangent bundle of $X$.
It turns out that the average number of solutions is equal
to the mixed symplectic volume of Banach convex bodies.
Earlier this result was obtained for Euclidean metrics in spaces
$V_i$.
In Euclidean case, the Banach convex bodies are the collections
of ellipsoids.
\end{abstract}
\renewcommand{\subjclassname}
{\textup{2010} Mathematics Subject Classification}
\maketitle
\section{Introduction}\label{intro}
Let $X$ be an $n$-dimensional  manifold,
 $V_1,\ldots, V_n$  be finite dimensional vector subspaces in $C^\infty (X,\mathbb R)$,
 and $V^*_i$ be their dual vector spaces.
We consider
the systems of equations
\begin{equation}\label{eqsys}
f_1 - a_1 =  \ldots= f_i - a_i= \ldots =f_n - a_n = 0,
\end{equation}
$f_i \in V_i,\, f_i \ne 0,\,a_i \in \R$.
Let $H_i=\{v^*\in V^*_i\:\vert\: v^*(f_i)=a_i\}$ be an affine hyperplane in $V^*_i$,
corresponding to equation $f_i-a_i=0$,
and $H=(H_1,\ldots,H_n)$ be a tuple of hyperplanes,
corresponding to system (\ref{eqsys}).
For an open set $U\subset X$,
denote by $N_U(H)$
the number of solutions of system (\ref{eqsys}) contained in $U$.
For a measure $\Xi$ on the set of tuples $(H_1,\ldots,H_n)$,
we define \emph{the average number of solutions} $\mathfrak M(U)$
as an integral of
%the function
$N_U(H)$ with respect to $\Xi$.

Let's now select some Banach metrics in the spaces $V_i$.
These metrics are used,
firstly,
to construct the measure $\Xi$
and, secondly,
to construct the
Banach convex bodies $\mathcal B_i$ in $X$.
Banach convex body or $B$-body
in $X$ is a collection of centrally symmetric
convex bodies in the fibers of the cotangent bundle of $X$.
The procedure for the appearance of $B$-bodies is described in Definition \ref{dfbBody}.

Using the standard symplectic structure on the cotangent
bundle,
we define the mixed symplectic volume of $B$-bodies,
see Definition \ref{dfMixed}.
Denote by $\mathcal B^U_i$ the restrictions of $B$-bodies $\mathcal B_i$
to the open set $U\subset X$.
It turns out that $\mathfrak M(U)$ equals to
%the
mixed volume of $B$-bodies $\mathcal B^U_i$.
We refer this result as a smooth version of the BKK theorem \cite{B75}.

We choose the measure $\Xi$ on the space of systems of equations,
equal to a product of Crofton measures in spaces $V^*_i$.
%corresponding to Banach structures of the spaces $V^*_i $.
Recall that a translation invariant measure on the Grassmanian of affine hyperplanes in
Banach space is called the Crofton measure,
if a measure of a set of hyperplanes,
crossing any segment,
equals to the length of this segment.
Under certain smoothness conditions
there exists a unique such measure.

If the Banach metric is Euclidean,
then the Crofton measure is invariant  under rotations.
In the Euclidean case
$B$-bodies $\mathcal B_i$
consist of ellipsoids,
see \cite{AK18,ZK14}.
From Nash embedding theorem \cite{N56} it follows
that
any collection of ellipsoids in $X$
can be obtained in this way.
From the Banach analogue of Nash theorem proved in \cite{BI94} it follows that
Banach convex bodies $\mathcal B_i$ arising in our situation are also arbitrary.

However,
if the unit ball in Banach space
is not a zonoid,
then the Crofton measure
\emph {is not positive everywhere},
see \cite{SCHN06}.
A brief explanation of the effect of non-positivity is given in Subsection \ref{mainBKK1}.
Recall that the zonotope is a polyhedron, represented as the Minkowski sum of segments,
and the zonoid is a limit of some converging sequence of zonotopes.
If the unit ball of the Banach metric is a zonoid,
then we call this metric a zonoid metric\footnote{In \cite{SCHN06} for the zonoid metric the term hypermetric is used}.
All ellipsoids are zonoids and,
respectively,
Euclidean metrics are zonoid metrics.

The non-positivity of the measure $\Xi$
reduces the validity of the notion
of average number of solutions.
For this reason,
in order to avoid the non-positivity of the Crofton meaure,
we choose
the most natural positive measure $\Xi$
related to Banach structures of the spaces $V_i$,
see subsection \ref{mainBKK2}.
It turns out
that this measure is a product of Crofton’s measures,
corresponding to some zonoid symmetrizations
of Banach metrics of the spaces $V_i$.
In this case,
$\mathfrak M_U$ equals to %the
mixed
volume of $B$-bodies,
corresponding to the symmetrized metrics of the spaces $V_i$.
Such $B$-bodies are zonoid collections,
see Corollary \ref{corZonoid}.
\section{Main results}\label{main}
Further we use the following notations:

- $V$ is a finite dimensional vector space;

- ${\rm Gr}_m(V)$ is the Grassmanian, whose points are vector subspaces
of dimension $m$ in $V$;

 - ${\rm D}_m(V)\subset \bigwedge^m(V)$ is the cone of decomposable $m$-vectors;

- ${\rm AGr}^m(V)$ is the affine Grassmanian, whose points are affine
subspaces of codimension $m$ in $V$.
\subsection{Preliminaries.}\label{mainPrel}
We start from some notions,
mainly from \cite{AK18}.
\subsubsection{Finsler and Banach convex bodies.}\label{mainFBb}
Let $X$ be a smooth manifold, $\dim X=n$.
Suppose that for every $x\in X$ we are given
a convex body ${\mathcal E}(x) \subset T_x^*$
depending continuously on $x\in X$.
 We call the collection ${\mathcal E} =\{{\mathcal E}(x) \ \vert\ x\in X\}$ a Finsler set
%or an $F$-set
in $X$.
The volume $\vol(\mathcal E)$  of a Finsler set $\mathcal E$
is defined as the volume
of $\cup_{x\in X}\mathcal E(x) \subset T^*X$ with respect
to the standard symplectic structure on the cotangent bundle.
More precisely,
if the symplectic form is $\omega$ then the volume form is $\omega ^n/ {n!}$.
Using Minkowski sum and homotheties,
we consider linear combinations of convex sets with non-negative coefficients.
The linear combination of Finsler sets is defined by
$$(\sum _i \lambda _i{\mathcal E}_i)(x) = \sum _i \lambda _i {\mathcal E}_i(x).$$
The volume $\vol(\lambda_1\mathcal E_1+\ldots+\lambda_n\mathcal E_n)$
is a homogeneous polynomial of degree $n$ in $\lambda_1, \ldots, \lambda _n$.
\begin{definition}\label{dfMixed}
The coefficient of polynomial $\vol(\lambda_1\mathcal E_1+\ldots+\lambda_n\mathcal E_n)$
at $\lambda_1\cdot\ldots\cdot\lambda_n$ divided by $n!$
is called the mixed volume of Finsler sets $\mathcal E_1,\ldots,\mathcal E_n$ and
is denoted by $\vol(\mathcal E_1, \ldots ,\mathcal E_n)$.
\end{definition}
\noindent
Given a $C^\infty $ map $f\colon X \to Y$ and
a Finsler set ${\mathcal E}$ in $Y$,
one can define the pull-back $f^*\mathcal E$ as follows.
\begin{definition}\label{dfbBodyPullBack}
$f^*{\mathcal E}(x) = d^*f(\mathcal E(f(x))$,
where $d^*f$
is the dual to the differential of $f$.
\end{definition}
\noindent
If the bodies $\mathcal E(x)$ are centrally symmetric,
then we call $\mathcal E$ Banach convex body
or $B$-body.
Pull-back of $B$-body also is $B$-body.
The procedure for appearance of $B$-bodies
in the problem is as follows.
\begin{definition}\label{dfbBody}
Let $V\subset C^\infty(X)$ be a finite dimensional Banach space,
$V^*$ is dual to $V$, and
$B\subset V$ is a ball of radius $1$ centered at $0$.

{\rm(1)}  The collection
  $\mathcal R=\{\mathcal R(v^*)=B\:\vert\: v^*\in V^*\}$
 is called the standard $B$-body in $V^*$.

{\rm(2)}
Define the mapping $\theta_V\colon X\to V^*$,
as $\theta_V(x)\colon f\mapsto f(x)$.

{\rm(3)}
We call the $B$-body
$\mathcal B=\theta_V^*(\mathcal R)$ a $B$-body
corresponding to the space of functions $V$.
\end{definition}
\subsubsection{Normal measures and normal densities.}\label{mainNm}
Recall that an $m$-density on $V$ is a continuous function
$\delta\colon{\rm D}_m (V) \to\mathbb R$,
such that $\delta (t\xi) = \vert t \vert \delta(\xi)$ for all $t\in\mathbb R$.
We can consider the $m$-density $\delta$ as a function on ${\rm Gr}_m (V)$,
whose value at the point $H\in{\rm Gr}_m (V)$ is a Lebesgue measure in $H$.
One example of an $m$-density is the absolute value of an exterior $m$-form.
An $m$-density on a manifold $X$ is an $m$-density $\delta _x$ on each tangent space $T_x$,
such that the assignment $x\mapsto \delta _x$ is continuous.
Let's now define the normal measure and
the normal density on a manifold.
\begin{definition}\label{dfNormaM}
A signed Borel measure $\mu$ on ${\rm AGr}_m(V)$ is called normal,
if $\mu$ is translation invariant and finite on compact sets.
 The space of normal measures on ${\rm AGr}_m(V)$ is denoted by $\mathfrak m_m(V)$.
\end{definition}
\noindent
We define a linear mapping $\chi_m$ of the space $\mathfrak m_m (V)$
into the space of $m$-densities as follows.
\begin{definition}\label{dfchi}
Let $\Pi_\xi\subset V$ be an
$m$-dimensional parallelotope generated by $\xi_1,\ldots,\xi_m\in V$ and
$
\mathcal J_{m,\Pi_\xi} = \{H \in {\rm AGr}^m(V) \ \vert \ H \cap \Pi_\xi \ne \emptyset\}.
$
For $\mu\in\mathfrak m_m$ put
$\chi _m(\mu)(\xi_1\wedge \ldots \wedge \xi_m ) = \mu ({\mathcal J}_{m,\Pi_\xi}).$
\end{definition}
\begin{definition}\label{dfNormalD}
The subspace $\chi_m(\mathfrak m_m(V))$ of the space of $m$-densities
is denoted by $\mathfrak n_m (V)$.
The densities from $\mathfrak n_m (V)$ are called normal $m$-densities
on $V$.
The normal density on the manifold $X$ is the density
whose values on $T_x (X)$ are normal for any $x\in X$.
\end{definition}
Further, we temporally create a scalar product in the space $V$
and corresponding Haar measure $d\nu$ on Grassmanian ${\rm Gr}_m(V)$.

For any function
$\varphi\in C^0({\rm Gr}_m(V))$
we construct
%the normal measure
$\mu_\varphi\in\mathfrak m_m(V)$
as follows.
Let $A\subset{\rm AGr}^m(V)$
and $A_H =\{h\in H \colon\: h+H^\bot\in A\}$,
where $H^\bot$ is the orthogonal complement
of $H\in{\rm Gr}_m(V)$.
Define $\mu_\varphi(A)$ as
\begin{equation}\label{eqFunctMeasure}
  \mu_\varphi(A)=\int_{H\in{\rm Gr}_m(V)}
\varphi(H)
\left(\int_{h\in A_H}dh\right)\:d\nu(H).
\end{equation}
If $\varphi\in C^\infty({\rm Gr}_m(V))$,
then the normal measure $\mu_\varphi$ is called smooth.
%The smoothness of $\mu_\varphi$ is independent
%from the choice of scalar product.

For $\psi\in C^0({\rm Gr}_m(V))$
define the $m$-density %$d_\psi$ as
$d_\psi(\xi_1\wedge\ldots\wedge\xi_m)=\psi(H_\xi)\vol_m(\Pi_\xi)$,
where $H_\xi$ is the subspace generated by the vectors $\xi_i$,
and $\vol_m(\Pi_\xi)$ is an $m$-dimensional area of the parallelotope $\Pi_\xi$.
Then $\psi\mapsto d_\psi$ \emph{is a  bijective mapping} from
$C^0({\rm Gr}_m(V))$ to the space of $m$-densities on $V$.
Smoothness of $m$-density $d_\psi$
is equivalent to the smoothness of $\psi$.
For $\phi\in C^0\left({\rm Gr}_m(V)\right)$ we set
\begin{equation}\label{eqt_m}
  t_m(\varphi)=\psi,
\end{equation}
where $d_\psi=\chi_m(\mu_\varphi)$.

Next, we use the cosine transform of functions on the Grassmannian.
Recall that the cosine transform
is
\begin{equation}\label{eqT_m}
 T_m(f) (G)=\int_{{\rm Gr}_m(V)}\vert\cos(H,G)\vert\: f(H)\:d\nu(H),
\end{equation}
where $\vert\cos(H,G)\vert$ is the distortion coefficient
of the $m$-dimensional area
under the mapping of orthogonal projection $H\to G$.
%The following statement directly follows from
%the above definitions of normal measure $\mu_\varphi$,
%$m$-density $d_\psi$ and cosine transform $T_m$.
%
\begin{corollary}\label{corCosine} $t_m=T_m$.
\end{corollary}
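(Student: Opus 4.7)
The plan is to prove $t_m(\varphi) = T_m(\varphi)$ by evaluating both sides on a decomposable $m$-vector $\xi_1\wedge\dots\wedge\xi_m$ and matching the two resulting integrals.

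First I fix linearly independent $\xi_1,\dots,\xi_m \in V$, write $\Pi_\xi$ for the parallelotope they generate, and $H_\xi$ for their linear span. By Definition \ref{dfchi} applied to $\mu_\varphi$,
\begin{equation*}
\chi_m(\mu_\varphi)(\xi_1\wedge\dots\wedge\xi_m) = \mu_\varphi(\mathcal J_{m,\Pi_\xi}).
\end{equation*}
Unfolding the definition (\ref{eqFunctMeasure}) with $A=\mathcal J_{m,\Pi_\xi}$, I observe that for each $H\in\mathrm{Gr}_m(V)$ the affine subspace $h+H^\perp$ meets $\Pi_\xi$ if and only if $h$ lies in the orthogonal projection $P_H(\Pi_\xi)\subset H$. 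Hence the inner integral equals the $m$-dimensional Lebesgue measure $\vol_m(P_H(\Pi_\xi))$ of that projection.

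The geometric step is the standard projection identity
\begin{equation*}
\vol_m(P_H(\Pi_\xi)) = |\cos(H_\xi,H)|\cdot \vol_m(\Pi_\xi),
\end{equation*}
which is precisely the definition of the cosine distortion used in (\ref{eqT_m}). Substituting yields
\begin{equation*}
\chi_m(\mu_\varphi)(\xi_1\wedge\dots\wedge\xi_m) = \vol_m(\Pi_\xi)\int_{\mathrm{Gr}_m(V)} \varphi(H)\,|\cos(H_\xi,H)|\,d\nu(H) = \vol_m(\Pi_\xi)\,T_m(\varphi)(H_\xi).
\end{equation*}
By (\ref{eqt_m}) the very same quantity is $d_{t_m(\varphi)}(\xi_1\wedge\dots\wedge\xi_m) = t_m(\varphi)(H_\xi)\,\vol_m(\Pi_\xi)$. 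Dividing out $\vol_m(\Pi_\xi)\neq 0$ and letting $H_\xi$ range over $\mathrm{Gr}_m(V)$ gives $t_m(\varphi) = T_m(\varphi)$.

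The main obstacle is really only bookkeeping: one must parametrize $\mathrm{AGr}^m(V)$ correctly as the bundle of pairs $(H,h)$ with $H\in\mathrm{Gr}_m(V)$ the orthogonal direction of $H^\perp$ and $h\in H$ the base point, and then verify the projection-volume identity with the correct cosine factor (and check that it is symmetric in its two arguments, so that the $|\cos(H,H_\xi)|$ appearing in the definition of $T_m$ matches the $|\cos(H_\xi,H)|$ coming from the projection). Once these identifications are in place, the corollary is a direct substitution, reflecting the classical Crofton-type content of the measure $\mu_\varphi$.
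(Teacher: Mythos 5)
Your proof is correct and takes the same route as the paper, which simply asserts that the equality follows from unwinding Definition \ref{dfchi} and equations (\ref{eqFunctMeasure}), (\ref{eqt_m}), (\ref{eqT_m}); you supply the details the paper leaves implicit, in particular the identification of the inner integral in (\ref{eqFunctMeasure}) with $\vol_m(P_H(\Pi_\xi))$ and the projection--volume identity giving the cosine factor.
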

\begin{proof}
Follows from
the above definitions of normal measure $\mu_\varphi$
and transforms $t_m$, $T_m$; see Definition \ref{dfchi} and equations
(\ref{eqFunctMeasure}), (\ref{eqt_m}), (\ref{eqT_m}).
\end{proof}
\begin{corollary}\label{cor1-1}
Let
$\delta$ be a smooth $1$-density on $V$.
Then
there exists a unique smooth normal measure $\mu\in\mathfrak m_1(V)$,
such that $\delta=\chi_1(\mu)$.
\end{corollary}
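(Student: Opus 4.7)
The plan is to reduce the corollary to the invertibility of the classical spherical cosine transform on smooth even functions. First I would use the stated bijection $\psi\mapsto d_\psi$ between $C^0({\rm Gr}_1(V))$ and $1$-densities on $V$: since $\delta$ is smooth, there is a unique smooth $\psi\in C^\infty({\rm Gr}_1(V))$ with $\delta=d_\psi$. A smooth normal measure is, by definition, of the form $\mu_\varphi$ for some $\varphi\in C^\infty({\rm Gr}_1(V))$, so finding a unique smooth $\mu$ with $\chi_1(\mu)=\delta$ amounts to finding a unique smooth $\varphi$ with $\chi_1(\mu_\varphi)=d_\psi$. By the definition (\ref{eqt_m}) of $t_1$ together with Corollary \ref{corCosine}, this is in turn equivalent to solving $T_1(\varphi)=\psi$ in $C^\infty({\rm Gr}_1(V))$.

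Next I would translate $T_1$ into its familiar spherical form. Using the auxiliary scalar product on $V$, identify ${\rm Gr}_1(V)$ with $S(V)/\{\pm 1\}$ and accordingly $C^\infty({\rm Gr}_1(V))$ with the space $C^\infty_{\rm even}(S(V))$ of smooth even functions on the unit sphere. Under this identification, $T_1$ becomes the classical cosine transform
\[
(T_1 f)(u) \;=\; \int_{S(V)} |\langle u,w\rangle|\, f(w)\, dw
\]
restricted to even functions, and the corollary reduces to the assertion that this operator is a bijection of $C^\infty_{\rm even}(S(V))$ onto itself.

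This bijectivity is the main step and essentially the only non-formal point. It is classical and follows from the Funk--Hecke formula: since $T_1$ commutes with the action of the orthogonal group, Schur's lemma implies that it acts as multiplication by a scalar $\lambda_{2k}$ on each isotypic component of degree-$2k$ spherical harmonics, and the Funk--Hecke integral gives an explicit nonzero value for every $\lambda_{2k}$. Their polynomial rate of decay together with the standard Sobolev characterization of smoothness on $S(V)$ then yields bijectivity on the Fr\'echet space $C^\infty_{\rm even}(S(V))$ (see e.g.\ \cite{SCHN06}). Setting $\varphi:=T_1^{-1}(\psi)$ and $\mu:=\mu_\varphi$ produces the unique smooth normal measure claimed. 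I note that the non-vanishing of every $\lambda_{2k}$ is specific to the line case $m=1$; for higher $m$ the analogous inversion fails, which is precisely the non-zonoid obstruction flagged in the introduction and motivates the symmetrization employed in the sequel.
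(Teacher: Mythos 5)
Your proof is correct and takes essentially the same route as the paper: reduce, via the bijection $\psi\mapsto d_\psi$ and Corollary~\ref{corCosine}, to the invertibility of $T_1$ on $C^\infty(\mathrm{Gr}_1(V))$, which the paper simply cites from \cite{AB04} while you sketch the standard Funk--Hecke/spherical-harmonics justification. One caveat on your closing aside: the ``non-zonoid obstruction'' in this paper is a \emph{positivity} phenomenon, not an invertibility one --- $T_1$ is bijective on smooth even functions regardless of the Banach ball, and what may fail is $T_1^{-1}(\psi)\geq 0$ (this is exactly Proposition~\ref{prPositivity}); the non-trivial kernel of $T_m$ for $m>1$ is a separate matter and is not what motivates the symmetrization in Section~\ref{mainBKK2}.
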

\begin{proof}
It is known (see, for example, \cite{AB04}),
that the restriction of $T_1$ to $C^\infty $ functions
is an automorphism $T_1: C^\infty ({\rm Gr}_1(V)) \to C^\infty({\rm Gr}_1(V)).$
Now Corollary \ref{cor1-1} follows
from Corollary \ref{corCosine}.
\end{proof}
\subsection{Smooth versions of theorem BKK}\label{mainBKK}
Let $V_1,\ldots,V_n$ be finite dimensional Banach spaces of smooth functions on $X\,$,
and let $V^*_i $ be their respective dual Banach spaces.
Suppose that the \emph{unit balls $B_i\subset V_i,\,B^*_i\subset V^*_i$ are
smooth convex bodies}.
We consider systems of equations of the form (\ref{eqsys}).
%Recall that we identify such systems with points $H=(H_1,\ldots,H_n)\in\Pi$,
%where
%$\Pi={\rm AGr}^1(V^*_1)\times\ldots\times{\rm AGr}^1(V^*_n)$.
Recall that we identify such systems with points $H=(H_1,\ldots,H_n)$
%of the set
of ${\rm AGr}^1(V^*_1)\times\ldots\times{\rm AGr}^1(V^*_n)$.
For any relatively compact open set $U\subset X$ let's
denote by $N_U(H)$ the number of isolated solutions of the corresponding
system in $U$.
For a measure $\Xi$ on ${\rm AGr}^1(V^*_1)\times\ldots\times{\rm AGr}^1(V^*_n)$,
denote by $\mathfrak M(U)$ the integral
of the function $N_U(H)$
with respect to the measure $\Xi$.
%
%\begin{definition}\label{dfAv}
Let's call $\mathfrak M(U)$
\emph{the average number of solutions} of {\rm(\ref{eqsys}) in $U$}
with respect to a measure $\Xi$.
%\end{definition}
%
%
\subsubsection{The first version of BKK theorem}\label{mainBKK1}
Let's define $1$-density $\vol_{1,i}$ on
%the space
$V^*_i$,
as $\vol_{1,i}(\xi)=\|\xi\|^*_i$,
where $\|\,\|^*_i$ is the norm in the space $V^*_i$.
Then,
according to Corollary \ref{cor1-1},
there exists a unique smooth measure $\mu_i\in\mathfrak m_1(V_i^*)$,
such that $\vol_{1, i}=\chi_1(\mu_i)$.
%Let $\mathfrak M(U)$ be an average number of solutions of (\ref{eqsys})
%  with respect to measure $\Xi=\mu_1\times\ldots\times\mu_n$
%  on ${\rm AGr}^1(V^*_1)\times\ldots\times{\rm AGr}^1(V^*_n)$.
%
\begin{theorem}\label{thmBKK_1}
Let $\mathcal B_i$ be a $B$-body in $X$,
corresponding to the space of functions $V_i$,
and let $\mathcal B_i^U$ be a restriction of $\mathcal B_i$ to $U$.
Then
\begin{equation}\label{eqBKK_1}
  \mathfrak M(U)=\frac{n!}{2^n}\vol(\mathcal B^U_1,\ldots,\mathcal B^U_n),
\end{equation}
where $\vol(\mathcal B^U_1,\ldots,\mathcal B^U_n)$ is a mixed volume of $B$-bodies.
\end{theorem}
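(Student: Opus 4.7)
The plan is to prove~\eqref{eqBKK_1} by iteratively integrating the $n$ factors of $\Xi=\mu_1\times\cdots\times\mu_n$ using a Banach-Crofton formula, and then matching the resulting pointwise integrand on $U$ with the mixed symplectic volume.

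The starting point is a Crofton formula for smooth Banach metrics. Since $\chi_1(\mu_i)=\vol_{1,i}=\|\cdot\|^*_i$, Definition~\ref{dfchi} gives $\mu_i(\{H : H\cap[0,\xi]\ne\emptyset\}) = \|\xi\|^*_i$. Subdividing a smooth curve $\gamma\subset V_i^*$ into short segments and taking the limit, justified by the smoothness of $\mu_i$ from Corollary~\ref{cor1-1}, yields the signed identity
\begin{equation*}
  \int_{{\rm AGr}^1(V_i^*)} \#(\gamma\cap H)\,d\mu_i(H) \;=\; \int_\gamma \|\dot\gamma(t)\|^*_i\,dt.
\end{equation*}
A direct computation from Definition~\ref{dfbBody} further identifies the support function $h_{\mathcal{B}_i(x)}(v) = \|d\theta_{V_i}(x)(v)\|^*_i$. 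Applying Fubini to $\mathfrak{M}(U)$, for a generic tuple $(H_2,\ldots,H_n)$ Sard's theorem ensures that $Y_1 = \{x\in U : \theta_{V_j}(x)\in H_j,\ j\geq 2\}$ is a smooth $1$-submanifold and $N_U(H)=\#(Y_1\cap\theta_{V_1}^{-1}(H_1))$, so the Crofton step gives
\begin{equation*}
\int N_U(H)\,d\mu_1(H_1) \;=\; \int_{Y_1} h_{\mathcal{B}_1(x)}(\tau_{Y_1})\,d\ell.
\end{equation*}
Iterating $n$ times, re-expressing each successive integral over $Y_{n-k}$ as an integral over $U$ via coarea, one arrives at $\mathfrak{M}(U) = \int_U \Psi(\mathcal{B}_1(x),\ldots,\mathcal{B}_n(x))\,dx$, where $\Psi$ is a symmetric, translation-invariant functional on $n$-tuples of centrally symmetric bodies that is multilinear under Minkowski sums.

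The final step is the pointwise identification $\Psi(K_1,\ldots,K_n) = \tfrac{n!}{2^n}\vol_n(K_1,\ldots,K_n)$, using the local expression $\omega^n/n! = dp\wedge dq$ in Darboux coordinates to localize the symplectic volume. Both sides are symmetric, translation-invariant and multilinear under Minkowski sums, so by polarization it suffices to check the identity on a spanning family. For segments $K_i=[-\alpha_i,\alpha_i]$ with $\alpha_i\in\mathbb{R}^n$, both sides reduce to $|\det(\alpha_1,\ldots,\alpha_n)|$ by direct computation, which accounts for the $\tfrac{n!}{2^n}$ prefactor; the general centrally symmetric case follows by density of differences of zonoid support functions in the space of even $1$-homogeneous continuous functions.

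The main obstacle I expect is precisely this density/spanning argument when the $B_i$ are not zonoids: the corresponding Crofton measure $\mu_i$ is signed (as emphasized in the introduction), so one cannot reduce to segments via positive Minkowski decomposition, and all intermediate steps must be interpreted as signed integrals. A natural candidate bridge is the cosine-transform automorphism of Corollary~\ref{corCosine}, which should allow one to view $\Psi$ and $\vol_n$ as dual functionals on $C^\infty({\rm Gr}_1)$ that agree on the (dense) cone of cosine transforms and extend continuously to arbitrary smooth Banach metrics.
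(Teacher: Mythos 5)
Your approach is genuinely different from the paper's. The paper does not iterate the Crofton formula $n$ times; instead it forms the product measure $\mu_1\cdot\ldots\cdot\mu_n$ and the product density $\vol_{1,1}\cdot\ldots\cdot\vol_{1,n}$ once and for all in the graded rings $\mathfrak m(V^*_1\times\ldots\times V^*_n)$, $\mathfrak n(V^*_1\times\ldots\times V^*_n)$ (Proposition~\ref{prCrofton1}, Theorem~\ref{thmCrofton1}), pulls back to $X$, and then invokes the identity $d_1(A_1)\cdot\ldots\cdot d_1(A_n)=n!\,d_n(A_1,\ldots,A_n)$ of \cite{AK18} (equation~(\ref{eqEquality_1})) to convert the density product directly into $\tfrac{n!}{2^n}D_n(\mathcal B_1,\ldots,\mathcal B_n)$, which integrates to the mixed symplectic volume by~(\ref{eqMixedD}). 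Your construction of the pointwise functional $\Psi$ by iterated Crofton/coarea, followed by identifying $\Psi=\tfrac{n!}{2^n}\vol_n$ via polarization, is in effect a re-derivation of the cited identity~(\ref{eqEquality_1}); the paper treats it as a black box from~\cite{AK18}.

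The gap is precisely at the step you flag as the ``main obstacle.'' Checking $\Psi=\tfrac{n!}{2^n}\vol_n$ on segments and extending ``by density'' only reaches zonoids, since Minkowski sums and limits of segments are exactly zonotopes and zonoids; but the non-zonoid $B_i$ are precisely the cases where $\mu_i$ is signed, which are the cases Theorem~\ref{thmBKK_1} is designed to cover. The proposed remedy --- passing to differences of zonoid support functions, which are indeed dense among even $1$-homogeneous functions --- requires extending $\Psi$ linearly beyond the cone of convex bodies and establishing continuity of both $\Psi$ and $\vol_n$ in a topology fine enough for that density to be useful; your proposal gestures at the cosine transform but does not supply this. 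It also presupposes that the $n$-fold iterated integral over signed $\mu_i$ is absolutely convergent and Fubini-stable, which is not automatic. The paper avoids both issues by never disintegrating $\Xi$: the product of normal measures is a single well-defined object for which positivity plays no role, and the conversion to $D_n$ is purely algebraic. A second, smaller omission: you do not address the singular set $X_s$ where $d\Theta$ degenerates, i.e.\ where the $\mathcal B_i(x)$ collapse into a proper subspace of $T^*_xX$; the paper devotes a separate lemma to showing $X_s$ contributes nothing to either side of~(\ref{eqBKK_1}), and some analogue of that is needed before your $\int_U\Psi\,dx$ is even well defined on all of $U$.
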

\noindent
Recall that if
Banach balls $B_i$ are not zonoids,
then the measure $\Xi$ may be non-positive.
Any ellipsoid is a zonoid.
Hence, in the case of Euclidean metrics,
the measure $\Xi$ is positive.
Therefore, the theorem \ref{thmBKK_1}
coincides with Theorem 1 in \cite{AK18},
see also \cite{ZK14}.
Here is a brief explanation of non-positivity of the measure $\Xi$.
For more information see \cite{SCHN06}.
\begin{proposition}\label{prPositivity}
Let $V$ be a finite dimensional Banach space,
and $V^*$ be a dual vector space.
Suppose that the unit ball $B\subset V$ is smooth.
Denote by $\mu$ a smooth normal measure on ${\rm AGr}^1(V^*)$,
such that
$$
\forall \xi\in V^*\colon\,\chi_1(\mu)(\xi)=\|\xi\|^*.
$$
Then the measure $\mu$ is positive if and only if $B$ is a zonoid.
\end{proposition}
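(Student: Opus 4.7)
The plan is to reduce the statement, via an auxiliary Euclidean structure on $V^*$, to the classical characterization of zonoids by positivity of the generating measure in the cosine representation of the support function.

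First, fix a scalar product on $V^*$. By formula~(\ref{eqFunctMeasure}) and Definition~\ref{dfNormalD}, every smooth $\mu\in\mathfrak m_1(V^*)$ is of the form $\mu=\mu_\varphi$ for some $\varphi\in C^\infty({\rm Gr}_1(V^*))$; the Euclidean structure pins this $\varphi$ down uniquely, since $\mu_\varphi$ determines $\varphi(H)$ via its fiberwise density along hyperplanes parallel to $H^\perp$. From (\ref{eqFunctMeasure}) the signed measure $\mu_\varphi$ is positive if and only if $\varphi\geq 0$ pointwise. Hence the proposition is equivalent to the assertion: $\varphi\geq 0$ if and only if $B$ is a zonoid.

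Next, unpack the hypothesis $\chi_1(\mu)(\xi)=\|\xi\|^*$. Parametrizing each affine hyperplane in $V^*$ as $h+H^\perp$ with $h\in H\in{\rm Gr}_1(V^*)$, the projection of $[0,\xi]$ onto $H$ is a segment of Euclidean length $|\langle\xi,e_H\rangle|$, where $e_H$ is a unit generator of $H$. Thus, in agreement with Corollary~\ref{corCosine},
\[
\chi_1(\mu_\varphi)(\xi)=\int_{{\rm Gr}_1(V^*)}|\langle\xi,e_H\rangle|\,\varphi(H)\,d\nu(H),
\]
and the equation $\chi_1(\mu_\varphi)(\xi)=\|\xi\|^*$ becomes precisely a cosine integral representation of the support function $h_B(\xi)=\|\xi\|^*$ of $B$, under the Euclidean identification $V^*\cong V$, with generating density $\varphi$.

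Now invoke the classical characterization (see~\cite{SCHN06}): a centrally symmetric convex body $B$ is a zonoid if and only if its support function admits a representation $h_B(\xi)=\int|\xi(u)|\,d\rho(u)$ for some positive even Borel measure $\rho$. If $\varphi\geq 0$, the identity above provides such a $\rho$, so $B$ is a zonoid. Conversely, if $B$ is a zonoid, some positive $\rho$ exists, and to conclude $\varphi\geq 0$ one needs to identify $\rho$ with $\varphi\,d\nu$.

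The main obstacle is exactly this regularity step in the converse direction: one must upgrade an \emph{a priori} Borel positive generating measure to a smooth density before uniqueness applies. For this I would use the automorphism property of $T_1$ on $C^\infty({\rm Gr}_1(V^*))$ that already powers Corollary~\ref{cor1-1}: since $h_B$ is smooth (as $B$ is smooth) and $T_1$ is a bijection on smooth even functions, any cosine representation of $h_B$ must have a smooth generating density; by the uniqueness from Corollary~\ref{cor1-1} this smooth density coincides with $\varphi$, so the positivity of $\rho$ forces $\varphi\geq 0$, completing the equivalence.
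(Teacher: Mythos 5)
Your proposal follows essentially the same path as the paper: fix a Euclidean structure, write $\mu=\mu_\varphi$ for smooth $\varphi$, note that positivity of $\mu$ reduces to $\varphi\geq 0$, recognize $\chi_1(\mu)(\xi)=\|\xi\|^*$ as a cosine-type representation of the support function of $B$, and invoke Schneider's zonoid characterization. The one place that needs tightening is your converse direction: the fact that $T_1$ is a bijection on $C^\infty$ does not by itself imply that \emph{every} cosine representation of the smooth function $h_B$ has a smooth generating density, and Corollary~\ref{cor1-1} only gives uniqueness within the smooth class; to identify an arbitrary positive generating measure $\rho$ with $\varphi\,d\nu$, you must invoke injectivity of the cosine transform on all even signed measures (a classical fact, but not what you cite). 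The paper avoids this entirely by quoting Proposition~\ref{prSchn} in the form tailored to the smooth case --- the support function of a \emph{smooth} zonoid admits a representation with a \emph{smooth nonnegative} density --- which yields $\varphi\geq 0$ directly, without introducing an auxiliary Borel measure $\rho$. Aside from that, the two arguments are the same.
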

\begin{proof}
Let the space $V^*$ have a scalar product $\langle \cdot, \cdot \rangle$.
Then by applying (\ref{eqFunctMeasure})
and Corollaries \ref{corCosine} and \ref{cor1-1},
we obtain

(i) $\mu=\mu_\varphi$,
where $\varphi\in C^\infty\left({\rm Gr}_1(V^*)\right)$

(ii) $\|\xi\|^*=T_1(\varphi)(\xi)$.
\par\smallskip
\noindent
From (\ref{eqFunctMeasure}) it follows that
the positivity of measure $\mu$ is equivalent
to the positivity of function $\varphi$.

Let $S$ be a unit sphere centered at $0$ in the Euclidean space $V^*$.
Consider the standard mapping $S\to{\rm Gr}_1(V^*)$
and denote by $f$ a pull-back of function $\varphi$.
It follows from (ii), that
$$
\|\xi\|^*=\int_S\vert\langle\xi,y\rangle\vert\: f(y)\:ds(y),
$$
where $ds$ is the orthogonally invariant measure on $S$.
%Now, since $\|\xi\|^*=h_B(\xi)$,
%where $h_B$ is a support function of the ball $B$,
Now, since $\|\xi\|^*$
is a support function of the ball $B$,
then Proposition \ref{prPositivity} follows from the Proposition \ref{prSchn} below.
\end{proof}
\begin{proposition}\label{prSchn}{\rm (see \cite{SCHN13})}
Let
$f\in C^\infty(S)$, $f\geq0$,
\begin{equation}\label{eqzon}
 h(x)=\int_S\vert\langle x,y\rangle\vert f(y)\:ds(y).
\end{equation}
Then
$h(x)$
is a support function of some smooth zonoid
with the center of symmetry at the point $0$.
Conversely, the support function of any smooth zonoid centered at $0$
can be represented as in {\rm(\ref{eqzon})}.
\end{proposition}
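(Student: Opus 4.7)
The plan is to handle the two directions separately: the forward implication via approximation by zonotopes, and the converse via inversion of the cosine transform.

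For the forward direction, I would fix a smooth $f \geq 0$ on $S$ and, for each $N$, partition $S$ into small Borel cells $U_j^{(N)}$ with chosen sample points $y_j^{(N)} \in U_j^{(N)}$ and masses $\lambda_j^{(N)} = \int_{U_j^{(N)}} f\, ds$. Then the zonotope $Z_N = \sum_j \lambda_j^{(N)} [-y_j^{(N)}, y_j^{(N)}]$ has support function
\[
h_N(x) = \sum_j \lambda_j^{(N)} |\langle x, y_j^{(N)}\rangle|,
\]
because the support function of $[-y,y]$ is $x \mapsto |\langle x, y\rangle|$ and support functions add under Minkowski sums. As the mesh shrinks, $h_N \to h$ uniformly on compacta, so the $Z_N$ converge in Hausdorff distance to a centrally symmetric body $K$ with support function $h$; by definition $K$ is a zonoid, and smoothness of $h$ --- hence of $\partial K$ --- follows by differentiation under the integral using smoothness of $f$.

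For the converse, let $K$ be a smooth centered zonoid with support function $h$. By definition $K$ is a Hausdorff limit of centered zonotopes, each of whose support functions has the form $\int_S |\langle x, y\rangle|\, d\rho_N(y)$ for a positive even measure $\rho_N$ (symmetrize the atomic generators if necessary, which does not change the integral). A uniform bound on the total masses $\rho_N(S)$ comes from integrating $h_N$ against a fixed probability measure on $S$, so the Banach--Alaoglu theorem supplies a weak-$*$ limit $\rho$, a positive even measure, satisfying $h(x) = \int_S |\langle x, y\rangle|\, d\rho(y)$. The right-hand side is exactly the cosine transform $T_1$ applied to $\rho$. Since $T_1$ is an automorphism of $C^\infty$ --- the fact invoked in the proof of Corollary \ref{cor1-1} --- and is injective on even distributions, there is a unique smooth $f$ with $T_1(f) = h$; comparing gives $\rho = f\, ds$, and positivity of $\rho$ forces $f \geq 0$.

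The main obstacle will be the smoothness and nonnegativity of $f$ in the converse direction. Producing a positive generating measure $\rho$ is a routine weak-$*$ compactness argument, but upgrading $\rho$ to a smooth density relies on the non-trivial fact that $T_1$ is a continuous bijection on $C^\infty$ and injective on even distributions on the sphere --- the spherical-harmonics content of Schneider's characterization of smooth zonoids. Once that analytic input is granted, the remainder of the argument is formal.
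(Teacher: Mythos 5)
The paper does not prove this proposition; it is stated with the attribution ``see \cite{SCHN13}'' and used as a known input from Schneider's book. Your argument is essentially the standard proof that Schneider gives: approximation of $f\,ds$ by atomic measures yields zonotopes converging to a zonoid with support function $h$, and for the converse one extracts a positive even generating measure $\rho$ by weak-$*$ compactness and then upgrades it to a $C^\infty$ density by the injectivity and $C^\infty$-isomorphism properties of the cosine transform $T_1$. So the logical structure is sound and matches the cited source.

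One step is stated too quickly. In the forward direction you write that smoothness of $h$ follows ``by differentiation under the integral using smoothness of $f$.'' The integrand $x\mapsto|\langle x,y\rangle|$ is not differentiable on the hyperplane $\langle x,y\rangle=0$, so naive differentiation under the integral only gives the first derivative $\int_S \operatorname{sgn}\langle x,y\rangle\,y\,f(y)\,ds(y)$; higher derivatives involve integrals over the great subspheres $\{y:\langle x,y\rangle=0\}$ and require the dimension to be at least two plus a genuine regularity argument. The clean way to close this is the same fact you already invoke in the converse direction: $T_1$ maps $C^\infty(S)$ into $C^\infty(S)$, so $h=T_1 f$ is automatically smooth. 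Also note that ``smoothness of $h$ --- hence of $\partial K$'' is not a formal implication (a $C^\infty$ support function gives a $C^\infty$ boundary only under additional curvature hypotheses); but in Schneider's terminology ``smooth zonoid'' here means precisely a zonoid with $C^\infty$ support function, so this does not affect the statement being proved.

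\end{document}
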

\subsubsection{The second version of theorem BKK}\label{mainBKK2}
To state the theorem,
we use the notions of 1) the "natural measure"\ $\Xi(V_1,\ldots,V_n)$
on the manifold ${\rm AGr}^1(V^*_1)\times\ldots\times{\rm AGr}^1(V^*_n)$
and 2) "zonoid symmetrization"\ of the Banach norm.
These definitions will be given immediately after
the statement of theorem.
The difference between Theorem 2 and Theorem 1 is
that Theorem 2 uses symmetrized metrics of spaces $V_i$,
which leads to a nonnegativity of the measure $\Xi$.
The space $V$ with a symmetrized norm is denoted by $V_{\rm symm}$.
The unit ball of the space $V_{\rm symm}$ is a zonoid,
see Definition \ref{dfSymm} and Corollary \ref{corZonoid}.

Let $\mathfrak M(U)$ be an average number of solutions of systems
(\ref{eqsys}) contained in $U$ with respect to a measure $\Xi(V_1,\ldots,V_n)$
(see Subsection \ref{mainBKK}),
and ${\mathcal B}_{i,{\rm symm}}$ be a $B$-body,
corresponding to $V_{i,{\rm symm}}$.
\begin{theorem}\label{thmBKK_2}
Let $\mathcal B^U_{i,{\rm symm}}$ be a restriction of $\mathcal B_{i,{\rm symm}}$ to $U$.
Then
$$
\mathfrak M(U)=
\frac{n!}{2^n}\vol({\mathcal B}^U_{1,{\rm symm}},\ldots,{\mathcal B}^U_{n,{\rm symm}})
$$
\end{theorem}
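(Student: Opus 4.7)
\emph{Strategy.} The plan is to deduce Theorem~\ref{thmBKK_2} from Theorem~\ref{thmBKK_1} by applying the latter not to the original Banach spaces $V_i$ but to their zonoid symmetrizations $V_{i,\mathrm{symm}}$. The crucial observation is that symmetrization alters only the Banach norm on each $V_i$, not the underlying vector space of smooth functions on $X$. In particular, the evaluation map $\theta_{V_i}\colon X\to V_i^*$ from Definition~\ref{dfbBody} is unchanged, and for any tuple $H=(H_1,\ldots,H_n)\in{\rm AGr}^1(V^*_1)\times\cdots\times{\rm AGr}^1(V^*_n)$ the integer $N_U(H)$ depends only on the affine hyperplanes themselves. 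So replacing each $V_i$ by $V_{i,\mathrm{symm}}$ changes only the measure $\Xi$ and the $B$-bodies $\mathcal B_i$, while leaving the solution count intact.

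\emph{Execution.} First, I would identify the natural measure $\Xi(V_1,\ldots,V_n)$. By Corollary~\ref{cor1-1} the symmetrized norm $\|\cdot\|^*_{i,\mathrm{symm}}$ on $V_i^*$ determines a unique smooth normal measure $\mu_{i,\mathrm{symm}}\in\mathfrak m_1(V_i^*)$ with $\chi_1(\mu_{i,\mathrm{symm}})=\|\cdot\|^*_{i,\mathrm{symm}}$, which is by definition its Crofton measure. Since the unit ball of $V_{i,\mathrm{symm}}$ is a zonoid (Corollary~\ref{corZonoid}), Proposition~\ref{prPositivity} forces $\mu_{i,\mathrm{symm}}$ to be nonnegative. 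Hence $\bigotimes_i \mu_{i,\mathrm{symm}}$ is an honest positive measure on ${\rm AGr}^1(V^*_1)\times\cdots\times{\rm AGr}^1(V^*_n)$, and by the description of the natural measure given in Subsection~\ref{mainBKK2} it coincides with $\Xi(V_1,\ldots,V_n)$. Next, I would apply Theorem~\ref{thmBKK_1} verbatim to the Banach spaces $V_{1,\mathrm{symm}},\ldots,V_{n,\mathrm{symm}}$; this yields
$$
\int N_U(H)\,d\Xi(V_1,\ldots,V_n)(H)=\frac{n!}{2^n}\vol(\mathcal B^U_{1,\mathrm{symm}},\ldots,\mathcal B^U_{n,\mathrm{symm}}),
$$
and the left-hand side is $\mathfrak M(U)$ by definition.

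\emph{Main obstacle.} The entire argument rests on reconciling the formal symmetrization of Definition~\ref{dfSymm} with the two properties that make the reduction work: (i) the unit ball $B_{i,\mathrm{symm}}\subset V_i$ and its polar are smooth, so the hypotheses of Theorem~\ref{thmBKK_1} are satisfied; and (ii) the Crofton measure produced by $\|\cdot\|^*_{i,\mathrm{symm}}$ through Corollary~\ref{cor1-1} is exactly the positive measure $\Xi(V_1,\ldots,V_n)$ singled out in Subsection~\ref{mainBKK2}, rather than some other a priori positive modification of the signed measure associated to the original norm. Once this book-keeping is done — using Corollary~\ref{corCosine} to translate between normal measures and cosine transforms, and Proposition~\ref{prSchn} to guarantee the zonoid/positivity correspondence at the level of support functions — Theorem~\ref{thmBKK_2} follows immediately from Theorem~\ref{thmBKK_1}.
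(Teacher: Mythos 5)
Your proposal is correct and follows essentially the same route as the paper: observe that replacing each $V_i$ by its zonoid symmetrization $V_{i,\mathrm{symm}}$ changes the norm (and hence the measure and the $B$-body) but not the underlying space of functions or the solution count, invoke Corollary~\ref{corSymm} to identify the natural measure $\Xi(V_i)$ with the Crofton measure of $V^*_{i,\mathrm{symm}}$, and then apply Theorem~\ref{thmBKK_1} to the symmetrized spaces. Your explicit remark that $\theta_{V_i}$ and $N_U(H)$ are unaffected by the change of norm is left implicit in the paper, and your flag about verifying smoothness of the symmetrized unit ball and its polar so that Theorem~\ref{thmBKK_1} actually applies is a legitimate point that the paper also passes over without comment.
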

%
%\noindent
%
Let's now define the natural measure $\Xi(V_1,\ldots,V_n)$.
If the measure $\Xi(V) $ is defined,
then we put
$
\Xi(V_1,\ldots,V_n) = \Xi(V_1) \times \ldots \times \Xi (V_n).
$
Hence, it remains to define the natural measure
$\Xi$ for $n=1$.

The manifold $Y$ is called a Banach manifold,
if in every tangent space $T_y(Y)$
there is a norm
continuously dependent on $y$.
Let $\mathfrak l(y)$ be a Lebesgue measure in $T_y(Y)$,
such that $\mathfrak l(y)(B_y) = 1$,
where $B_y\subset T_y(Y)$ is a ball of radius $1$ centered at $0$.
Denote by $d\mathfrak l$ the corresponding
volume density on $Y$.

Let $V$ be a Banach space with the unit sphere $S=\partial B$.
We restrict the norm function
to the tangent spaces of the sphere and
consider $S$
as a Banach manifold.
The following notation is used in the definition of a natural measure:

- $x\in S$

- $x^\bot\subset V^*$ is a
subspace of codimension $1$
orthogonal to
$x$

- ${\rm AGr}^1(V^*)\ni H(x,t)=\{y+x^\bot\:\vert\:t\in\R,\:\langle x,y\rangle=t\}$

%in{\rm AGr}^1(V^*)$,
%where $t\in\R$ and $\langle x,y\rangle=t$,
- $A_x=\{H\in A\subset{\rm AGr}^1(V^*)\:\vert\:H=H(x,t),\:t\in\R\}$,

- $d\mathfrak l$ is a Banach volume density in $S$.
\begin{definition}\label{dfNatural}
We define the natural measures $\Xi(V)$ in ${\rm AGr}^1(V^*)$ and
$\Xi(V_1,\ldots,V_n)$ in
${\rm AGr}^1(V_1^*)\times\ldots\times{\rm AGr}_1(V_n^*)$
as
\begin{equation}\label{eqNatural}
  \Xi(V)(A)=\frac{1}{2}\int_{x\in S}\left(\int_{H(x,t)\in A_x}\:dt\right)\:d\mathfrak l(x)
\end{equation}
and $\Xi(V_1,\ldots,V_n)=\Xi(V_1) \times \ldots \times \Xi (V_n)$.
\end{definition}
Let's now define the zonoid symmetrization $V_{\rm symm}$
of Banach space $V$.
Let
\begin{equation}\label{eqSymm}
  h_{\rm symm}(y)=\frac{1}{2}\int_{x\in S} \vert\langle x,y\rangle\vert\:d\mathfrak l(x).
\end{equation}
The function $h_{\rm symm}$ is
smooth, positive, even, convex and positively homogeneous of degree $1$,
i.e. $h_{\rm symm}(tx)=\vert t\vert\: h_{\rm symm}(x)$.
Hence
$h_{\rm symm}$ is a support function of a smooth centrally symmetric
convex body $B_{\rm symm}\subset V$.
We consider $B_{\rm symm}$ and $h_{\rm symm}$ as
symmetrizations of Banach unit ball $B\subset V$ and
its support function respectively.
\begin{definition}\label{dfSymm}
Let $\|\:\|_{\rm symm}$ be a norm in space $V$ with a unit ball $B_{\rm symm}$.
The corresponding Banach space is denoted by $V_{\rm symm}$.
We call the space $V_{\rm symm}$
a zonoid symmetrization of $V$.
\end{definition}
\begin{corollary}\label{corSymm}
The natural measure $\Xi(V)$ is a Crofton measure in Banach space $V^*_{\rm symm}$
dual to $V_{\rm symm}$.
\end{corollary}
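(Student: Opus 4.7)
The plan is to verify directly from Definition \ref{dfNatural} that $\Xi(V)$ satisfies the two defining properties of a Crofton measure on $V^*_{\rm symm}$: translation invariance on $V^*$, and that the measure of the set of affine hyperplanes meeting an arbitrary segment equals that segment's length measured in the dual norm $\|\cdot\|^*_{\rm symm}$. The key observation is that (\ref{eqNatural}) and (\ref{eqSymm}) are designed to be the two sides of precisely this Crofton identity, so the proof should amount to reading the definitions in the right order.

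First I would check translation invariance. Under a translation $y\mapsto y+a$ of $V^*$, the parameterization $H(x,t)$ shifts to $H(x,t+\langle x,a\rangle)$, so the inner Lebesgue integral $\int dt$ appearing in (\ref{eqNatural}) is manifestly invariant, and hence so is $\Xi(V)$.

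Next, by translation invariance it suffices to test the length property on segments of the form $[0,\xi]$ with $\xi\in V^*$. A hyperplane $H(x,t)$ meets $[0,\xi]$ precisely when $t$ lies between $0$ and $\langle x,\xi\rangle$, so the set $\{t:H(x,t)\cap[0,\xi]\neq\emptyset\}$ has Lebesgue measure $|\langle x,\xi\rangle|$. Substituting into (\ref{eqNatural}) and comparing with (\ref{eqSymm}) gives
$$\Xi(V)\bigl(\{H : H\cap[0,\xi]\neq\emptyset\}\bigr) = \frac{1}{2}\int_{x\in S} |\langle x,\xi\rangle|\,d\mathfrak{l}(x) = h_{\rm symm}(\xi),$$
and $h_{\rm symm}(\xi)=\|\xi\|^*_{\rm symm}$ since $h_{\rm symm}$ is by construction the support function of the unit ball $B_{\rm symm}$ of $V_{\rm symm}$, whose dual norm is precisely the norm of $V^*_{\rm symm}$.

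The argument is essentially bookkeeping, so no serious obstacle arises. The only point worth watching is the factor $\tfrac{1}{2}$ in (\ref{eqNatural}): the parameterization $(x,t)\in S\times\R$ double-covers the space of unoriented affine hyperplanes via $(x,t)\sim(-x,-t)$, and the matching $\tfrac{1}{2}$ in (\ref{eqSymm}) is what makes the final output land on $h_{\rm symm}(\xi)$ rather than $2h_{\rm symm}(\xi)$. Once this normalization is checked, both required properties follow and the corollary is established.
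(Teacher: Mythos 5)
Your proof is correct and follows essentially the same route as the paper: evaluate $\Xi(V)$ on the set of hyperplanes meeting $[0,\xi]$, observe that the inner $t$-integral over $\{t : H(x,t)\cap[0,\xi]\neq\emptyset\}$ gives $|\langle x,\xi\rangle|$, and match the resulting expression $\tfrac{1}{2}\int_S|\langle x,\xi\rangle|\,d\mathfrak l(x)$ with the definition of $h_{\rm symm}(\xi)=\|\xi\|^*_{\rm symm}$. You additionally make explicit the translation-invariance check and the normalization by $\tfrac12$ (accounting for the $(x,t)\sim(-x,-t)$ double cover), which the paper leaves implicit; this is a harmless and reasonable elaboration rather than a different argument.
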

\begin{proof}
Let $A$ be a set of affine hyperplanes in $V^*$
intersecting the segment $[0,\xi]$.
From (\ref{eqNatural}) and (\ref{eqSymm}) it follows that
$\Xi(V)(A)=h_{\rm symm}(\xi)$.
Now the assertion follows from the equality Let $\|\:\|^*_{\rm symm}=h_{\rm symm}$,
 where $\|\:\|^*_{\rm symm}$ is a norm in Banach space $V^*_{\rm symm}$
dual to $V_{\rm symm}$.
\end{proof}
\noindent
\begin{corollary}\label{corZonoid}
$B_{\rm symm}$ is a zonoid.
\end{corollary}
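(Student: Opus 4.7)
The approach is to rewrite formula (\ref{eqSymm}) in the form (\ref{eqzon}) and invoke Proposition \ref{prSchn}. The essential difference between the two formulas is cosmetic: (\ref{eqSymm}) integrates over the Banach unit sphere $S \subset V$ with respect to the Banach density $d\mathfrak{l}$, while (\ref{eqzon}) integrates over a Euclidean sphere against Euclidean surface measure with a nonnegative smooth weight. The plan is to perform a change of variables that moves the integration to a Euclidean sphere while preserving nonnegativity of the integrand, after which Proposition \ref{prSchn} applies directly.

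First I would fix an auxiliary Euclidean scalar product on $V$, let $S^E \subset V$ denote its unit sphere, and write $ds^E$ for the associated Euclidean surface measure. Smoothness of the Banach ball $B$ makes the radial map $\rho\colon S^E \to S$, $\rho(u) = u/\|u\|$, a $C^\infty$ diffeomorphism, and the Banach density pulls back as $\rho^* d\mathfrak{l} = J\, ds^E$ for a smooth strictly positive function $J$ on $S^E$. Since $\langle \rho(u), y\rangle = \langle u, y\rangle/\|u\|$, formula (\ref{eqSymm}) rewrites as
$$
h_{\rm symm}(y) = \int_{S^E} |\langle u, y\rangle|\, f(u)\, ds^E(u), \qquad f(u) := \frac{J(u)}{2\|u\|},
$$
where $f$ is smooth and strictly positive on $S^E$.

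Using the Riesz identification $V \cong V^*$ afforded by the auxiliary scalar product, $S^E$ is realized as the Euclidean unit sphere in $V^*$ and the pairing $\langle u, y\rangle$ becomes the Euclidean inner product on $V^*$. The displayed integral then has exactly the shape (\ref{eqzon}), so Proposition \ref{prSchn} identifies $h_{\rm symm}$ as the support function of a smooth zonoid, which is $B_{\rm symm}$ by construction. The only step deserving care is positivity of the pulled-back weight $f$; this is however automatic, since $\rho$ is a smooth diffeomorphism between smooth manifolds and $\|u\| > 0$ throughout $S^E$. This positivity is exactly what allows direct appeal to Proposition \ref{prSchn} rather than some signed-measure variant of it, and is the entire reason the zonoid symmetrization deserves its name.
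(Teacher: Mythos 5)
Your proof is correct and follows the same route as the paper, which simply cites Proposition \ref{prSchn}; your contribution is to spell out the change of variables (radial map from the Euclidean sphere $S^E$ to the Banach sphere $S$, pulling back $d\mathfrak l$ to a smooth positive multiple of $ds^E$) that converts formula (\ref{eqSymm}) into the exact Euclidean form (\ref{eqzon}), a step the paper leaves implicit. The observation that the resulting weight $f = J/(2\|\cdot\|)$ is smooth and nonnegative is indeed the reason Proposition \ref{prSchn} applies and is worth making explicit.
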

\begin{proof}
Follows from the Proposition \ref{prSchn}.
\end{proof}
%
%\noindent
%
%\begin{remark}\label{rm}
%Let $\|\:\|^*_{\rm symm}$ be a norm in Banach space $V^*_{\rm symm}$
%dual to $V_{\rm symm}$.
%Then $\|\:\|^*_{\rm symm}=h_{\rm symm}$.
%From definitions \ref{dfNatural} and
%HHHHH it followes that $\chi_1(\Xi(V))=$
%\end{remark}
%%
\begin{corollary}\label{corZonoid_i}
Let ${\mathcal B}_{i,{\rm symm}}$ be a $B$-body corresponding to $V_{i,{\rm symm}}$.
Then for any $x\in X$ the convex body ${\mathcal B}_{i,{\rm symm}}(x)$
is a zonoid.
\end{corollary}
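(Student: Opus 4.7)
The plan is to unfold the definition of the $B$-body $\mathcal B_{i,\mathrm{symm}}$ and exhibit $\mathcal B_{i,\mathrm{symm}}(x)$ as a linear image of the unit ball $B_{i,\mathrm{symm}} \subset V_{i,\mathrm{symm}}$, then invoke Corollary~\ref{corZonoid} together with the elementary fact that any linear image of a zonoid is a zonoid.

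First I would spell out the construction from Definition~\ref{dfbBody}. The standard $B$-body $\mathcal R_i$ on $V_{i,\mathrm{symm}}^*$ assigns to every $v^*\in V_{i,\mathrm{symm}}^*$ the unit ball $B_{i,\mathrm{symm}}\subset V_{i,\mathrm{symm}}$, sitting in the cotangent space $T^*_{v^*}(V_{i,\mathrm{symm}}^*) = V_{i,\mathrm{symm}}$. Applying Definition~\ref{dfbBodyPullBack} to the evaluation map $\theta_{V_i}\colon X\to V_i^*$ (which does not depend on the chosen norm), one gets
\[
\mathcal B_{i,\mathrm{symm}}(x) \;=\; (d\theta_{V_i})_x^{*}\bigl(B_{i,\mathrm{symm}}\bigr)\;\subset\; T_x^*X,
\]
where $(d\theta_{V_i})_x^{*}\colon V_i \to T_x^*X$ is the dual to the differential of $\theta_{V_i}$ at $x$. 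Thus $\mathcal B_{i,\mathrm{symm}}(x)$ is the image of a fixed convex body under a fixed linear map.

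Next I would note that by Corollary~\ref{corZonoid} the body $B_{i,\mathrm{symm}}$ is a zonoid in $V_{i,\mathrm{symm}}$. It therefore suffices to verify the standard principle that a linear image of a zonoid is a zonoid: if $L\colon V\to W$ is linear and $Z=\lim_k Z_k$ is a Hausdorff limit of zonotopes $Z_k=\sum_{j}[-s_{k,j},s_{k,j}]$, then $L(Z_k)=\sum_j[-L(s_{k,j}),L(s_{k,j})]$ is again a zonotope, and continuity of $L$ on the space of convex bodies gives $L(Z)=\lim_k L(Z_k)$, so $L(Z)$ is a zonoid. Applying this to $L=(d\theta_{V_i})_x^{*}$ and $Z=B_{i,\mathrm{symm}}$ yields the claim.

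No real obstacle is expected; the only thing to be careful about is keeping the cotangent/tangent identifications straight so that the map of which we take the dual is unambiguously $d\theta_{V_i}$ at $x$, and observing that replacing the norm on $V_i$ by its zonoid symmetrization does not change $\theta_{V_i}$ as a smooth map, only the convex body that we transport along its codifferential.
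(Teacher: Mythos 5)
Your proof is correct and follows the same route as the paper: identify $\mathcal B_{i,\mathrm{symm}}(x)$ as the linear image of $B_{i,\mathrm{symm}}$ under the codifferential $V_{i,\mathrm{symm}}\to T_x^*X$, invoke Corollary~\ref{corZonoid}, and use that linear images of zonoids are zonoids. You merely spell out the last step, which the paper leaves implicit.
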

\begin{proof}
From Definitions \ref{dfbBody},(3) and \ref{dfbBodyPullBack} it follows
that $\mathcal B_{i,{\rm symm}}(x)$ is the image of zonoid $B_{i,{\rm symm}}$
under a linear mapping $V_{i,{\rm symm}}\to T^*_x(X)$.
Hence,
$\mathcal B_{i,{\rm symm}}(x)$ is a zonoid.
\end{proof}
%
%\noindent
Note that the symmetrization of  Euclidean metric differs from the original one
by a constant factor.
\section{Proofs of main resuts}\label{proof}
\subsection{Banach convex bodies and normal densities}\label{proof0}
Here are some facts about
normal densities and Banach convex bodies, see \cite{AK18}.
\subsubsection{The product of normal densities}\label{proof01}
The graded vector space
$$\mathfrak m(V)=\mathfrak m_0(V)\oplus\mathfrak m_1(V)\oplus\ldots \oplus\mathfrak m_n(V)$$
(see  Definition \ref{dfNormaM})
has a structure of a graded ring.
Namely,
let
$$
{\mathcal D}_{p,q} = \{(G,H)\in{\rm AGr}^p(V)\times{\rm AGr}^q(V)\ \vert \ {\rm codim}\, G\cap H \ne p+q\}.
$$
Then we have the mapping
$$
P_{p,q}\colon{\rm AGr}^p(V)\times{\rm AGr}^q(V)\setminus\mathcal D_{p,q}\to
{\rm AGr}^{p+q}(V),
$$
given by $P_{p,q}(G,H) = G\cap H$.
\begin{definition}\label{dfProdMes}
Let $\mu\in\mathfrak m_p(V), \, \nu\in\mathfrak m_q(V), \, p+q \le n$.
For $D\subset{\rm AGr}^{p+q}$ we put
$$\mu\cdot\nu(D) = (\mu \times \nu)(P_{p,q}^{-1}(D))$$
\end{definition}
\noindent
Let $\chi = \oplus \chi_m\colon\mathfrak m(V)\to\mathfrak n(V)$;
see Definition \ref{dfchi}.
It is easily seen that ${\rm Ker}\:\chi$
is a homogeneous ideal of the graded ring $\mathfrak m(V)$.
Therefore $\mathfrak n(V)$ carries a structure of a graded ring,
the ring of normal densities on $V$.
The pointwise construction of product leads
to the definition of \emph{the ring of normal densities on smooth manifold} $X$,
denoted by ${\mathfrak n}(X)$.
\begin{example}
Let $w$ be a differential form of degree $m$ on a manifold $X$.
Then the absolute value  $\vert w\vert$ is an $m$-density on $X$.
If the form $w$ is locally decomposable,
i.e.
if $w$ is the product of 1-forms in a neighborhood of every point $x\in X$,
then $\vert w\vert\in\mathfrak n(X)$.
If $w_1, w_2$ are locally decom\-posable forms then
$\vert w_1 \wedge w_2\vert = \vert w_1 \vert \cdot \vert w_2\vert$;
see \cite{AK19}.
\end{example}
\subsubsection{Contravariance}\label{proof02}
The assignment $X \to {\mathfrak n}(X)$ is
a contravariant functor from the category of smooth manifolds to the
category of commutative graded rings, see \cite{AK18}, Theorem 6.
Namely, the pull-back of a normal density is normal
and the product of pull-backs is the pull-back of the product of normal densities.
This property is based on the following
construction of {\it the pull-back of a normal measure}.

For a linear map $\varphi\colon U\to V$ of vector spaces
one can define a ring homomorphism
$\varphi^*\colon\mathfrak m(V)\to\mathfrak m(U)$,
so that $\chi _m (\varphi^*\mu ) = \varphi^*(\chi _m (\mu ))$.
We will further only consider $\varphi^*$ for $\varphi$ being an epimorphism.
In this case we have a closed embedding
$\varphi_*:{\rm AGr}^m(V) \to {\rm AGr}^m(U)$
defined by taking the preimage under $\varphi $ of an affine subspace in $V$. By definition, the measure $\varphi ^*\mu$ is
supported on $\varphi _*{\rm AGr}^m(V)$ and
$
\varphi^*\mu(\Omega) = \mu (\varphi _*^{-1}(\Omega ))$ for any Borel set $\Omega \subset \varphi_*{\rm AGr}^m(V)$.
\subsubsection{Normal densities and convex bodies}\label{proof03}
Let $A_1\ldots, A_m$ be convex bodies in the dual space $V^*$. The $m$-density
$d_m(A_1, \ldots, A_m)$ on $V$ is defined as follows.
\begin{definition}\label{dfd_i}
Let $H$ be the subspace of $V$ generated by $\xi _1, \ldots, \xi_m\in V$,
$H^\bot \subset V^*$  the orthogonal complement to $H$,
and $\pi _H\colon V^* \to V^*/H^\bot$ the projection map.
Consider $\xi_1\wedge \ldots \wedge \xi_m$ as a volume form on $V^*/H^\bot $.
Then $d_m(A_1,\ldots,A_m)(\xi_1,\ldots,\xi_m)$ is the mixed
$m$-dimensonal volume of $\pi_HA_1, \ldots, \pi_HA_m$.
\end{definition}
\noindent
In particular,
$d_1(A)(\xi) = h(\xi) - h(-\xi )$, where $h : V \to { R}$ is the support
function of a convex body $A$.
\begin{lemma}\label{lm3-1}
Let $A\subset V^*$ be a smooth convex body, $\dim A=m\leq\dim V$.
Then $d_1(A)\in\mathfrak n_1(V)$.
\end{lemma}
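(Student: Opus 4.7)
The plan is to reduce to the full-dimensional case via pullback along a natural epimorphism, and then invoke Corollary~\ref{cor1-1}. First I would rewrite $d_1(A)(\xi)$ explicitly: for $\xi\in V\setminus\{0\}$ with $H=\R\xi$, the image $\pi_H A\subset V^*/H^\perp$ is, under the identification $V^*/H^\perp\cong\R$ furnished by $\xi$, the segment with endpoints $h_A(\xi)$ and $-h_A(-\xi)$, so $d_1(A)(\xi)=h_A(\xi)+h_A(-\xi)$. Since this expression is invariant under translations of $A$, after a translation I may assume that the affine hull $W\subset V^*$ of $A$ is a linear subspace of dimension $m$.

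I would handle the full-dimensional case $m=\dim V$ first. Then $A$ is full-dimensional in $V^*$, and smoothness of $A$ implies $h_A\in C^\infty(V\setminus\{0\})$; hence $d_1(A)(\xi)=h_A(\xi)+h_A(-\xi)$ is a smooth $1$-density on $V$. Corollary~\ref{cor1-1} then produces a smooth normal measure $\mu\in\mathfrak{m}_1(V)$ with $\chi_1(\mu)=d_1(A)$, giving $d_1(A)\in\mathfrak{n}_1(V)$.

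For the general case $m\le\dim V$, set $W^\perp=\{\xi\in V:\alpha(\xi)=0\text{ for all }\alpha\in W\}$ and let $\pi\colon V\to V/W^\perp\cong W^*$ be the quotient, which is a linear epimorphism. Viewing $A$ as a full-dimensional convex body in $W\cong (W^*)^*$ yields an associated density $d_1^W(A)$ on $W^*$, which is normal by the previous paragraph. For $\xi\in V$ and $\alpha\in A\subset W$ one has $\alpha(\xi)=\pi(\xi)(\alpha)$, whence $h_A(\xi)=h_A^W(\pi\xi)$, and therefore $d_1(A)=\pi^*d_1^W(A)$ as $1$-densities on $V$. Since the normal-density functor is contravariant (Section~\ref{proof02}), the pullback $\pi^*d_1^W(A)$ belongs to $\mathfrak{n}_1(V)$, as required.

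The main technical obstacle I expect is verifying the identity $d_1(A)=\pi^*d_1^W(A)$ as an equality of $1$-densities rather than merely of functions: this requires unwinding Definition~\ref{dfd_i} together with the natural isomorphism $W^*\cong V/W^\perp$ and checking that the mixed-volume expression respects restriction of support functions. Once this naturality is in place, the three paragraphs above combine directly into the result via Corollary~\ref{cor1-1} and the contravariance of the functor $X\mapsto\mathfrak{n}(X)$.
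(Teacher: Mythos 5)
Your proof is correct and takes essentially the same route as the paper: treat the full-dimensional case by applying Corollary~\ref{cor1-1} to the smooth support function, and in the degenerate case realize $d_1(A)$ as the pullback of a smooth $1$-density under the projection $V\to V/H^\bot$, then invoke contravariance of the normal-density functor. You simply make explicit a few steps the paper leaves implicit (the translation reduction to a linear affine hull and the verification that $d_1(A)=\pi^*d_1^W(A)$), and your formula $d_1(A)(\xi)=h_A(\xi)+h_A(-\xi)$ is the correct width expression (the minus sign in the paper's remark following Definition~\ref{dfd_i} is evidently a typo).
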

\begin{proof}
For $m=\dim V$ Lemma follows from Corollary \ref{cor1-1}.
Let $A$ be contained in $m$-dimensional subspace $H$ of $V^*$.
(If $m<\dim V$ then the density $d_1(A) $ is not smooth
and the corollary \ref{cor1-1} is inapplicable.)
Then the support function of $A$
is a pull-back of a smooth function on ${\rm D}_1(V/H^\bot)$
under the mapping of projection $V\to V/H^\bot$,
where $H^\bot\subset V$ is the orthogonal complement of the subspace $H$.
Hence, $d_1(A)$ is a pull-back of the smooth $1$-density
and therefore is normal.
\end{proof}
\noindent
The proof of the theorems \ref{thmBKK_1}, \ref{thmBKK_2} is
based on the following identity;
see \cite{AK18}, Theorem 7.
Suppose $A_1, \ldots, A_m$ are centrally symmetric smooth convex bodies.
Then
\begin{equation}\label{eqEquality_1}
 d_1(A_1)\cdot \ldots \cdot d_1(A_m) = m!\,d_m(A_1, \ldots, A_m).
\end{equation}
From (\ref{eqEquality_1}) it follows
that $\forall m\colon\:d_m(A_1, \ldots, A_m)\in\mathfrak n_m(V)$.

For
$B$-bodies $\mathcal E_1,\ldots,\mathcal E_m$ denote by
$D_m(\mathcal E_1,\ldots,\mathcal E_m)\in\mathfrak n_m(X)$
the $m$-density whose value at $x\in X$
equals $d_m(\mathcal E_1(x),\ldots,\mathcal E_m(x))$.
By definition, $D_m(\mathcal E)=D_m(\mathcal E_1,\ldots,\mathcal E_m)$,
where $\mathcal E_i=\mathcal E,\: i=1,\ldots,m$.
For $n=\dim X$, from Definitions \ref{dfMixed}, \ref{dfd_i} it follows
that
\begin{equation}\label{eqMixedD}
  \int _X D_n(\mathcal E_1,\ldots,\mathcal E_n)=\vol(\mathcal E_1,\ldots,\mathcal E_n).
\end{equation}
The mixed volume of $B$-bodies $\vol(\mathcal E_1,\ldots,\mathcal E_n)$
(see Definition \ref{dfMixed}) relates to
the commonly used mixed volume of convex bodies in the following way.
Let $h$ be a Riemannian metric on $X$ and $h^*$
the dual metric on $T_x^*$.
Then
\begin{equation}\label{eqUsual}
 \vol(\mathcal E_1,\ldots,\mathcal E_n)=\int_XV(\mathcal E_1(x),\ldots,\mathcal E_n(x))\:dx
\end{equation}
where $V$ is a mixed volume with respect to the metric $h^*$
and
$dx$ is the Riemannian $n$-density on $X$.
\subsection{Crofton formula for product of Banach spaces}\label{proofCrofton}
Let $V_1,\ldots, V_n$  be finite dimensional Banach spaces,
 $V^*_1,\ldots,V^*_n$ their dual vector spaces,
let $\Pi_\xi$ be an $n$-dimensional parallelotope generated by $\xi_1,\ldots,\xi_n\in V^*_1\times\ldots\times V^*_n$,
and let
$$
\mathfrak P_\xi=\{H=(H_1\times\ldots\times H_n)\in
\prod_{i\leq n}{\rm AGr}^1(V_i^*)\:\colon\: \Pi_\xi\cap H\neq\emptyset\}.
$$
Let's define the Crofton $n$-density $\Omega$ on $V^*_1\times\ldots\times V^*_n$
as
$$
\Omega(\xi_1\wedge\ldots\wedge\xi_n)=(\mu_1\times\ldots\times\mu_n)(\mathfrak P_\xi),
$$
where $\mu_i$ is a Crofton measure in ${\rm AGr}^1(V^*_i)$;
see subsection \ref{mainBKK1}.
Below we keep the notation $\vol_{1, i}$, $\mu_i$ for the pull-backs of $\vol_{1, i}$, $\mu_i$ under the mappings of projection
$\pi_i\colon V^*_1\times\ldots\times V^*_n\to V^*_i$.
Let
$
\mu_1\cdot\ldots\cdot\mu_n$ and $\vol_{1,1}\cdot\ldots\cdot\vol_{1,n}$
be products of normal measures $\mu_i$
and normal densities $\vol_{1,i}$ in the rings
$
\mathfrak m(V^*_1\times\ldots\times V^*_n)$ and $\mathfrak n(V^*_1\times\ldots\times V^*_n)$
respectively.
\begin{proposition}\label{prCrofton1}
$\Omega=\vol_{1,1}\cdot\ldots\cdot\vol_{1,n}$
\end{proposition}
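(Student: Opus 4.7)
The plan is to recognize both sides as the image under $\chi_n$ of the same normal measure on ${\rm AGr}^n(V_1^*\times\ldots\times V_n^*)$, and deduce the identity from the fact that $\chi$ is a ring homomorphism (Subsection \ref{proof01}). Write $W=V_1^*\times\ldots\times V_n^*$, and continue to denote by $\mu_i$ the pull-back $\pi_i^*\mu_i\in\mathfrak m_1(W)$. Since $\chi_1(\pi_i^*\mu_i)=\pi_i^*\chi_1(\mu_i)=\vol_{1,i}$ (the pulled-back $1$-density on $W$), and $\chi$ respects products, we have
\[
\vol_{1,1}\cdot\ldots\cdot\vol_{1,n}=\chi_1(\mu_1)\cdot\ldots\cdot\chi_1(\mu_n)=\chi_n(\mu_1\cdot\ldots\cdot\mu_n).
\]
So it suffices to check that $\chi_n(\mu_1\cdot\ldots\cdot\mu_n)=\Omega$.

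The key observation for this is that the measure $\mu_i\in\mathfrak m_1(W)$ is concentrated on the closed subset of \emph{cylindrical} affine hyperplanes $\pi_i^{-1}(H_i)$ with $H_i\in{\rm AGr}^1(V_i^*)$. If we choose one hyperplane $G_i=\pi_i^{-1}(H_i)$ from each of the $n$ factors, then their intersection $\bigcap_{i\leq n} G_i=H_1\times\ldots\times H_n\subset W$ is automatically of codimension exactly $n$, so the degenerate locus $\mathcal D_{1,1,\ldots,1}$ is disjoint from the support of the product $\mu_1\times\ldots\times\mu_n$, and the iterated intersection map $P$ of Subsection \ref{proof01} simply sends $(\pi_1^{-1}(H_1),\ldots,\pi_n^{-1}(H_n))$ to $H_1\times\ldots\times H_n$.

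Now unwind the definitions: for the parallelotope $\Pi_\xi$ generated by $\xi_1,\ldots,\xi_n\in W$,
\[
\chi_n(\mu_1\cdot\ldots\cdot\mu_n)(\xi_1\wedge\ldots\wedge\xi_n)=(\mu_1\cdot\ldots\cdot\mu_n)(\mathcal J_{n,\Pi_\xi})=(\mu_1\times\ldots\times\mu_n)\bigl(P^{-1}(\mathcal J_{n,\Pi_\xi})\bigr)
\]
by Definitions \ref{dfchi} and \ref{dfProdMes}. By the description of $P$ above, the preimage $P^{-1}(\mathcal J_{n,\Pi_\xi})$ consists of the tuples $(\pi_1^{-1}(H_1),\ldots,\pi_n^{-1}(H_n))$ for which $H_1\times\ldots\times H_n$ meets $\Pi_\xi$, i.e. precisely the set $\mathfrak P_\xi$ used to define $\Omega$. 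Hence the right-hand side equals $\Omega(\xi_1\wedge\ldots\wedge\xi_n)$, completing the argument.

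The main obstacle is the bookkeeping: one must verify (i) that the pull-backs of the $\mu_i$ under the projections $\pi_i$ are well defined normal measures on $W$ (the $\pi_i$ are epimorphisms, so the construction of Subsection \ref{proof02} applies), (ii) that the iterated product of Definition \ref{dfProdMes} is unambiguous on these pull-backs because the intersection map $P$ is everywhere transverse on their joint support, and (iii) that pull-back of $\chi_1$-images agrees with $\chi_1$ of the pulled-back measure, so that $\chi$ really intertwines the two ring structures on cylindrical measures. Once these points are settled, the proposition reduces to the single chain of equalities above.
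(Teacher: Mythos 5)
Your proof is correct and follows essentially the same route as the paper's: both identify $\vol_{1,1}\cdot\ldots\cdot\vol_{1,n}$ with $\chi_n(\mu_1\cdot\ldots\cdot\mu_n)$ via the ring-homomorphism property of $\chi$, observe that the pulled-back $\mu_i$ are supported on cylindrical hyperplanes so the intersection map $P$ is everywhere transverse there and the ring product coincides with the direct product measure, and then unwind $\chi_n$ and Definition \ref{dfProdMes} to land on $\mathfrak P_\xi$. Your write-up is somewhat more explicit about the identification $P^{-1}(\mathcal J_{n,\Pi_\xi})=\mathfrak P_\xi$, which the paper leaves implicit, but no new idea is introduced.
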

\begin{proof}
From the description of pull-back operation for normal measure
in Subsection \ref{proof02}
and from Definition \ref{dfProdMes}
%product
% $\mu_i\in\frak m_1(V_i)$
%under the mapping $\pi_i\colon V^*_1\times\ldots\times V^*_n\to V^*_ i$
%(see and)
it follows that
$$
\mu_1\cdot\ldots\cdot\mu_n=\mu_1\times\ldots\times\mu_n.
$$
%From the definition of the product of normal measures
%in Subsection \ref{proof01} it follows that
%
%\noindent
Hence,
$$\Omega(\xi_1\wedge\ldots\wedge\xi_n)=(\mu_1\cdot\ldots\cdot\mu_n)(\mathfrak P_\xi)
=(\vol_{1,1}\cdot\ldots\cdot\vol_{1,n})(\xi_1\wedge\ldots\wedge\xi_n),$$
where the second equality follows from definition of product of normal densities,
see in Subsection \ref{proof02}.
\end{proof}
%
%\noindent
Let $X$ be a submanifold in $V^*_1\times\ldots\times V^*_n$, $\dim X=n$.
For $H=H_1\times\ldots\times H_n\in\prod_{i\leq n}{\rm AGr}^1(V_i^*)$
denote by $N_X(H)$
the number of points in $X\cap H$.
\begin{theorem}\label{thmCrofton1}
$
\int\limits_{H\in\prod_{i\leq n}{\rm AGr}^1(V_i^*)}N_X(H)\:d\Xi=
\int\limits_X \vol_{1,1}\cdot\ldots\cdot\vol_{1,n}.
$
\end{theorem}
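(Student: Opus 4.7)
The plan is to reduce to Proposition \ref{prCrofton1} by a partition-of-unity decomposition of $X$ together with a Crofton-type Fubini argument. Cover $X$ by a locally finite family $\{U_\alpha\}$ of open sets, subordinate to a smooth partition of unity $\{\rho_\alpha\}$, chosen so that each $X\cap U_\alpha$ lies in a single coordinate chart and for $\Xi$-almost every tuple $H\in\prod_i{\rm AGr}^1(V_i^*)$ the intersection $H\cap X\cap U_\alpha$ is either empty or a single transverse point. Since $\dim X=n$ and $H$ has total codimension $n$ in the ambient product, generic intersections are isolated, and a Sard/transversality argument applied to the incidence locus $\{(x,H):x\in X\cap H\}$ shows that the excluded set of $H$ has $\Xi$-measure zero.

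Next, derive the identity locally. By the very definition of $\Omega$, for any $n$-dimensional parallelotope $\Pi$ in the ambient product, $\Omega(\Pi)=(\mu_1\times\ldots\times\mu_n)(\mathfrak P_\Pi)$. For $\Pi$ sufficiently small, a generic tuple $H$ meets $\Pi$ in at most one point, so this number equals $\int N_\Pi(H)\,d\Xi$. Fixing a local parametrization of $X\cap U_\alpha$, approximating $\rho_\alpha\cdot(X\cap U_\alpha)$ by a Riemann sum of small tangent parallelotopes, and passing to the limit yields
$$\int\sum_{p\in H\cap X\cap U_\alpha}\rho_\alpha(p)\,d\Xi \;=\; \int_{X\cap U_\alpha}\rho_\alpha\,\Omega.$$

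Summing over $\alpha$, using $\sum_\alpha\rho_\alpha=1$ on $X$, the almost-everywhere identity $N_X(H)=\sum_\alpha\sum_{p\in H\cap X}\rho_\alpha(p)$, and Fubini, one obtains $\int N_X(H)\,d\Xi=\int_X\Omega$. Proposition \ref{prCrofton1} then replaces $\Omega$ by $\vol_{1,1}\cdots\vol_{1,n}$, which is the desired identity.

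The main technical obstacle is the Riemann-sum passage from the exact identity on an infinitesimal parallelotope to the integrated identity on a curved piece of $X$. One needs to control the error when a small piece of $X$ is replaced by a tangent parallelotope; this rests on the continuity of $\Omega$ guaranteed by smoothness of the $\mu_i$, together with the zero-measure statement for the non-transverse locus. Since the $\mu_i$ may be signed when the balls $B_i$ are not zonoids, Fubini must be invoked in its signed version, for which absolute integrability is verified using continuity of $\Omega$ and the fact that $X$ is covered by relatively compact $U_\alpha$.
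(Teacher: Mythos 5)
Your argument is correct and follows essentially the same route as the paper, which simply declares ``Follows from Proposition \ref{prCrofton1}'' and leaves the underlying Crofton-formula step implicit. You have filled in that step --- partition of unity, tangent-parallelotope approximation, Sard/transversality for the non-transverse locus, signed Fubini --- which is precisely the bookkeeping needed to pass from the pointwise density identity $\Omega=\vol_{1,1}\cdot\ldots\cdot\vol_{1,n}$ to the integral identity $\int_X\Omega=\int N_X(H)\,d\Xi$.
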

\begin{proof}
Follows from Proposition \ref{prCrofton1}.
\end{proof}
\noindent
Let $B_j\subset V_j$ be a unit ball centered at $0$.
Denote by $\mathcal B_j$ the pull-back of the standard $B$-body $V^*_j\times B_j$
in $V^*_j$ under the mapping
$X\hookrightarrow\prod_{i\leq n}V^*_i\xrightarrow{\pi_j} V^*_j$,
see Definitions \ref{dfbBodyPullBack} and \ref{dfbBody},(3).
\begin{theorem}\label{thmCrofton2}
%Пусть $B_j\subset V_j$ -- единичный шар с центром $0$.
%Обозначим через $\mathcal B_j$ обратный образ стандартного $B$-тела $V^*_j\times B_j$
%в $V^*_j$ при отображении
%$X\hookrightarrow\prod_{i\leq n}V^*_i\xrightarrow{\pi_j} V^*_j$.
%Тогда
$
\int\limits_{H\in\prod_{i\leq n}{\rm AGr}^1(V_i^*)}N_X(H)\:d\Xi=
\frac{n!}{2^n}\vol(\mathcal B_1,\ldots,\mathcal B_n),
$
where $\Xi=\mu_1\times\ldots\times\mu_n$.
\end{theorem}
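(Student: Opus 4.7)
The plan is to combine Theorem \ref{thmCrofton1} with the algebraic identity (\ref{eqEquality_1}) from Subsection \ref{proof03}, which translates a product of $1$-densities of centrally symmetric bodies into an $n$-density expressing a mixed volume. Concretely, Theorem \ref{thmCrofton1} already reduces the left-hand side to $\int_X \vol_{1,1}\cdot\ldots\cdot\vol_{1,n}$, so the only thing left is to rewrite this integrand in terms of the $B$-bodies $\mathcal B_i$.

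The key identification to establish is
\[
\vol_{1,i} \;=\; \tfrac{1}{2}\,d_1(\mathcal B_i)
\]
as $1$-densities on $X$ (with $\mathcal B_i$ restricted to $X$ via $\pi_i$). To see this, I would unpack both sides: on the one hand, $\vol_{1,i}$ was defined on $V_i^*$ as the dual norm $\|\cdot\|_i^*$, and its pull-back to $X$ along $\pi_i\colon X\hookrightarrow\prod_j V_j^*\to V_i^*$ sends a tangent vector $\xi\in T_xX$ to $\|d\pi_i(\xi)\|_i^*$. On the other hand, $\mathcal B_i(x)=d^*\pi_i(B_i)\subset T_x^*X$ by Definitions \ref{dfbBody} and \ref{dfbBodyPullBack}, so its support function at $\xi$ equals $h_{B_i}(d\pi_i(\xi))=\|d\pi_i(\xi)\|_i^*$. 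Since $\mathcal B_i(x)$ is centrally symmetric, the formula for $d_1$ recorded after Definition \ref{dfd_i} gives $d_1(\mathcal B_i)(\xi)=2\,h_{\mathcal B_i(x)}(\xi)=2\|d\pi_i(\xi)\|_i^*$, yielding the claimed identification with the factor $\tfrac12$.

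Once this identification is in hand, the rest is a clean chain: multiplying in $\mathfrak n(X)$ and applying (\ref{eqEquality_1}) gives
\[
\vol_{1,1}\cdot\ldots\cdot\vol_{1,n}
\;=\;\frac{1}{2^n}\,d_1(\mathcal B_1)\cdot\ldots\cdot d_1(\mathcal B_n)
\;=\;\frac{n!}{2^n}\,D_n(\mathcal B_1,\ldots,\mathcal B_n),
\]
and integrating over $X$, together with (\ref{eqMixedD}), produces $\tfrac{n!}{2^n}\vol(\mathcal B_1,\ldots,\mathcal B_n)$. Substituting into Theorem \ref{thmCrofton1} yields the desired equality.

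The main subtlety is the verification that the pull-back of the standard $B$-body along a non-submersive inclusion-projection interacts correctly with the pull-back of the $1$-density $\vol_{1,i}$; in other words, that the contravariant functoriality described in Subsection \ref{proof02} commutes with the assignment $A\mapsto d_1(A)$. This uses the fact (Lemma \ref{lm3-1}) that $d_1$ of a smooth convex body lies in $\mathfrak n_1$ even when the body is degenerate in the ambient space, so the identification $\vol_{1,i}=\tfrac12 d_1(\mathcal B_i)$ remains valid pointwise on $X$ regardless of whether $d\pi_i|_{T_xX}$ is injective. Everything else is routine bookkeeping with the ring structure of $\mathfrak n(X)$ and the definitions of mixed volume of $B$-bodies.
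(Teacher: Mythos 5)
Your proof is correct and follows essentially the same route as the paper: Theorem \ref{thmCrofton1} reduces the left-hand side to $\int_X\vol_{1,1}\cdot\ldots\cdot\vol_{1,n}$, the pointwise identification $\vol_{1,j}|_X=\tfrac12 D_1(\mathcal B_j)$ (you rightly read $d_1(A)(\xi)=h(\xi)+h(-\xi)$, so this is $2h$ for the centrally symmetric body, correcting the sign in the line after Definition \ref{dfd_i}), then (\ref{eqEquality_1}) and (\ref{eqMixedD}) finish the computation. The only difference is that you spell out the support-function verification and the role of Lemma \ref{lm3-1} for degenerate bodies, which the paper leaves implicit.
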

\begin{proof}
The $1$-density $\vol_{1,i}$ on the space $T_x(X)$
is a support function convex body $\mathcal B_j(x)\subset T^*_x(X)$.
Hence,
the restriction of $\vol_{1,j}$ on $X$
equals to $\frac{1}{2}D_1(\mathcal B_j)$,
see Definition \ref{dfd_i}.
Applying (\ref{eqEquality_1}) we have
 $$
 \vol_{1,1}\cdot\ldots\cdot\vol_{1,n}=\frac{1}{2^n}D_1(\mathcal B_1)\cdot\ldots\cdot D_1(\mathcal B_n)=\frac{n!}{2^n}D_n(\mathcal B_1,\ldots,\mathcal B_n).
 $$
 Now the Theorem follows from (\ref{eqMixedD}).
\end{proof}
\subsection{Proof of Theorem \ref{thmBKK_1}}\label{proof1}
Assume first that
the mapping
$$
\Theta=\theta_{V_1}\times\ldots\times\theta_{V_n}\colon\:U\to V^*_1\times\ldots\times V^*_n
$$
is a proper smooth embedding, see Defifnition \ref{dfbBody}, (2).
Then
Theorem \ref{thmBKK_1} follows from Theorem \ref{thmCrofton2}.

Assume now that the differential of the map $\Theta$
in $U$ is non-degenerate.
Then there exists a locally finite open cover $U=\bigcup_j U_j$,
such that the mapping $\Theta$ defines an embedding for each of $U_j$.
Now it remains to notice that both sides of (\ref{eqBKK_1})
are additive functions of the domain $U$.
Therefore, the theorem is proved by the inclusion-exclusion formula.
Recall
that the function $A$ is called additive,
if $A(U_1\cup U_2)=A(U_1) + A (U_2) - A (U_1\cap U_2)$.

Let $X_s\subset X$ be a set of points,
in which the differential of the map $\Theta$ is degenerate.
The Theorem \ref{thmBKK_1} follows from the Lemma.
\begin{lemma}
Both sides of {\rm(\ref{eqBKK_1})}
for domains $U$ and $U\cap(X\setminus X_s)$ coincide.
\end{lemma}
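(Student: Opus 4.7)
The plan is to establish the lemma by verifying separately that each side of (\ref{eqBKK_1}) is unchanged when $U$ is replaced by $U\cap(X\setminus X_s)$; combined with the already-established equality on the nondegenerate locus, this will complete the proof of Theorem \ref{thmBKK_1}.

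For the right-hand side, I would use formula (\ref{eqUsual}) to write $\vol(\mathcal{B}_1^U,\ldots,\mathcal{B}_n^U)$ as the integral over $U$ of the pointwise convex-geometric mixed volume $V(\mathcal{B}_1(x),\ldots,\mathcal{B}_n(x))$ against a Riemannian density. The key observation is that this integrand vanishes identically on $X_s$. Indeed, at any $x\in X_s$ there exists a nonzero $v\in T_x(X)$ with $d\theta_{V_i}(v)=0$ for every $i$; by Definitions \ref{dfbBody} and \ref{dfbBodyPullBack}, this forces every body $\mathcal{B}_i(x)=d^*\theta_{V_i}(B_i)\subset T_x^*$ to lie in the hyperplane $v^\bot$. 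Since $n$ convex bodies contained in a common hyperplane of an $n$-dimensional vector space have zero mixed volume, the integrand vanishes throughout $X_s$, so the right-hand side is unaffected by removing $X_s$.

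For the left-hand side, I would show that $N_U(H)=N_{U\cap(X\setminus X_s)}(H)$ for $\Xi$-almost every $H$, by proving that the set of tuples $H$ admitting any solution in $X_s\cap U$ has $\Xi$-measure zero. This combines two ingredients. First, since the rank of $d\Theta$ is strictly less than $n$ throughout $X_s$, Sard's theorem yields that $\Theta(X_s)$ has Lebesgue measure zero in the ambient product $V_1^*\times\ldots\times V_n^*$. Second, Proposition \ref{prCrofton1} identifies the $\Xi$-mass of the family of tuples $H$ whose product $H_1\times\ldots\times H_n$ meets a parallelotope $\Pi_\xi$ with the value of the smooth normal density $\Omega=\vol_{1,1}\cdot\ldots\cdot\vol_{1,n}$, which on any fixed compact region is bounded by a uniform multiple of the Lebesgue volume of $\Pi_\xi$. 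Covering $\Theta(X_s\cap U)$ by countably many parallelotopes of arbitrarily small total Lebesgue volume then shows that the exceptional set of tuples $H$ is $|\Xi|$-null.

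The main obstacle is this second step, because $\Xi$ need not be a positive measure when the unit balls $B_i$ fail to be zonoids; the Crofton-type control must therefore be applied to the total variation $|\Xi|$ rather than to $\Xi$ itself. This is legitimate precisely because the density $\Omega$ is smooth, so its pointwise absolute value provides a uniform dominating bound on any compact region. Granted this, both sides of (\ref{eqBKK_1}) remain unchanged when passing from $U$ to $U\cap(X\setminus X_s)$, and the lemma follows.
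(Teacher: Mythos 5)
Your strategy mirrors the paper's two-step decomposition exactly: show the mixed-volume side vanishes on $X_s$ because every $\mathcal B_i(x)$ collapses into a common proper subspace $v^\bot\subset T_x^*X$, and show the average-count side is unaffected because $\Theta(X_s)$ is negligible in the sense appropriate for a Crofton-type estimate. (The paper's own prose appears to swap the words ``left'' and ``right'' relative to equation (\ref{eqBKK_1}); your labeling is the correct one.) Your argument on the mixed-volume side, via formula (\ref{eqUsual}) and the pointwise vanishing of the convex-geometric mixed volume, is correct and is the same idea as the paper's observation that $\dim\bigl(\cup_{x\in X_s}\mathcal B_i(x)\bigr)<\dim T^*X$.

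There is, however, one genuine imprecision on the other side. You invoke Sard's theorem to conclude that $\Theta(X_s)$ has \emph{Lebesgue measure zero in the ambient product} $V_1^*\times\cdots\times V_n^*$. That statement carries no information: the ambient space has dimension $\sum_i\dim V_i^*\gg n$, so the entire image $\Theta(X)$ already has ambient Lebesgue measure zero, whether or not $d\Theta$ is degenerate. What the covering argument actually requires, and what the paper states, is that the \emph{$n$-dimensional Hausdorff measure} of $\Theta(X_s)$ vanishes; this is the refined (Federer-type) form of Sard's lemma for rank-deficient $C^1$ maps, or equivalently the area formula applied to the vanishing Jacobian on $X_s$. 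Correspondingly, the quantity controlling $\Xi(\mathfrak P_\xi)=\Omega(\xi_1\wedge\cdots\wedge\xi_n)$ in your covering estimate is the $n$-dimensional \emph{area} of $\Pi_\xi$, not its (automatically zero) ambient Lebesgue volume. With that correction your covering argument is sound and supplies the details the paper explicitly omits; your remark that the estimate must be run against the total variation $|\Xi|$, since $\Xi$ may fail to be positive when the balls $B_i$ are not zonoids, is a legitimate point the paper leaves unaddressed.
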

\begin{proof}
If $x\in X_s$
then there exists a proper subspace $E\subset T^*_xX$,
such that $\forall i\colon\mathcal B_i(x)\subset E$.
Hence, $\dim\left(\cup_{x\in X_s}\mathcal B_i(x)\right)<\dim T^*X$.
Therefore the left hand sides of (\ref{eqBKK_1})
for $U$ and $U\cap(X\setminus X_s)$ coincide.

Sard's lemma states that
the $n$-dimensional Hausdorff measure of $\Theta(X_s)$ is zero.
This implies the equality of the right-hand sides of (\ref{eqBKK_1})
(we omit the details).
\end{proof}
\subsection{Proof of Theorem \ref{thmBKK_2}}\label{proof2}
Let $\Xi(V_i)$ be the natural measure on ${\rm AGr}^1(V^*_i)$,
see Definition \ref{dfNatural}.
Let $V^*_{i,{\rm symm}}$ be a Banach space dual to the symmetrization
 $V_{i,{\rm symm}}$ of Banach space $V_i$.
The measure $\Xi(V_i)$ is a Crofton measure in Banach space $V^*_{i,{\rm symm}}$,
see Corollary \ref{corSymm}.
Hence, the Theorem \ref{thmBKK_2} coincides with the Theorem \ref{thmBKK_1}
for zonoid symmetrizations $V_{1,{\rm symm}},\ldots,V_{n,{\rm symm}}$
of spaces $V_{1},\ldots,V_{n}$.
\begin{thebibliography}{References}
\bibitem[AK18]{AK18} D.\,Akhiezer, B.\,Kazarnovskii, {\it Average number of zeros and mixed symplectic volume of Finsler sets},
Geom. Funct. Anal., vol. 28 (2018), pp.1517--1547.

\bibitem[AK19]{AK19} D.\,Akhiezer, B.\,Kazarnovskii,
{\it The ring of normal densities, Gelfand transform and smooth BKK-type theorems}, https://arxiv.org/pdf/1907.00633.pdf.

%\bibitem[Al07]{Al07} S.\,Alesker, {\it Theory of valuations on manifolds: a survey}, Geom. Funct. Anal., vol. 17 (2007), p. 1321--1341.

\bibitem[AB04]{AB04} S.\,Alesker, J.\,Bernstein, {\it Range characterization of the cosine tranform on higher Grassmanians},
Advances in Math. 184, no. 2 (2004), pp.367--379.

%\bibitem[PF08]{PF08} J.-C.\,\' Alvarez Paiva, E.\,Fernandes,
%{\it Gelfand transforms and Crofton formulas}, Selecta Math., vol. 13, no.3
%(2008), pp.369 -- 390.
%

\bibitem[BI94]{BI94} D. Yu. Burago, S. Ivanov, \emph{Isometric embeddings of Finsler manifolds}, Algebra i Analiz 5 (1993), no. 1, 179 -- 192 (Russian, with Russian summary); English transl., St. Petersburg Math. J. 5 (1994), no. 1, 159 -- 169.

\bibitem[B75]{B75} D.N.\,Bernstein, {\it The number of roots of a system of equations}, Funct. Anal. Appl. 9, no.2 (1975),
pp.95--96 (in Russian); Funct. Anal. Appl. 9, no.3 (1975), pp.183--185 (English translation).
%
%\bibitem[GS94]{GS94} I.M.\,Gelfand, M.M.\,Smirnov, {\it Lagrangians satisfying Crofton formulas, Radon transforms,
%and nonlocal differentials}, Advances in Math. 109, no.2 (1994), pp. 188 -- 227.
%
%\bibitem [S76]{S76} L.A.\,Santal\'o, {\it Integral Geometry and Geometric
%Probability}, Addison-Wesley, 1976.
%
\bibitem[N56]{N56} Nash, John (1956), \emph{The imbedding problem for Riemannian manifolds}, Annals of Mathematics, vol. 63, no. 1,  (1956), 20--63

\bibitem[SCHN13]{SCHN13} R.\,Schneider, {\it Convex Bodies: The Brunn-Minkowski Theory}, Second edition, Cambridge Univ. Press, 2013.
\bibitem[SCHN06]{SCHN06} Rolf Schneider, \emph{Crofton measures in projective Finsler Spaces}, pp. 67--98, in Integral Geometry and Convexity, Wuhan, China, 18 – 23 October 2004, Edited By: Eric L Grinberg, Shougui Li, Gaoyong Zhang and Jiazu Zhou
\bibitem [ZK14]{ZK14} D.\,Zaporozhez, Z.\,Kabluchko, {\it Random determinants, mixed volumes of ellipsoids,
and zeros of Gaussian random fields}, Journal of Math. Sci., vol. 199, no.2 (2014), pp. 168--173.
\end {thebibliography}
\end {document}